\documentclass[12pt,twoside,reqno]{amsart}

\allowdisplaybreaks
\newtheorem{thm}{Theorem}[section]
\newtheorem{lemma}{Lemma}[section]
\newtheorem{rem}{Remark}[section]

\theoremstyle{definition}

\theoremstyle{remark}

\newcommand{\R}{{\mathbb R}}
\newcommand{\N}{{\mathbb N}}

\newcommand{\noi}{\noindent}

\newcommand{\s}{\mathcal S}

\def \Om{\Omega}

\numberwithin{equation}{section}

\begin{document}
\title[On one problem of Saut-Temam]
{
	To one problem of Saut-Temam for the 3D Zakharov-Kuznetsov equation
}
\author{N. A. Larkin$^\dag$\;	\&
	M. V. Padilha}

\address
{
	Departamento de Matem\'atica, Universidade Estadual
	de Maring\'a, Av. Colombo 5790: Ag\^encia UEM, 87020-900, Maring\'a, PR, Brazil
}

\thanks
{
	$^\dag$
	Corresponding author
}
\bigskip

\email{ nlarkine@uem.br;nlarkine@yahoo.com.br; marcvfag@gmail.com }
\date{}

 \subjclass[2010]{35Q53,
35B40}
\keywords
{ZK equation, stabilization}

\begin{abstract}An initial-boundary value problem for the 3D Zakharov-Kuznetsov equation posed on an unbounded domain is considered.
Existence and uniqueness of a global regular solution as well as  exponential decay   of the $H^2$-norm for small initial data are proven.
\end{abstract}

\maketitle

\section{Introduction}\label{introduction}

We are concerned with the existence, uniqueness and exponential decay of the $H^2$-norm for global regular solutions   to an initial-boundary value problem (IBVP) for the 3D Zakharov-Kuznetsov (ZK) equation 
 \begin{equation}
u_t+(1 +u)u_x +u_{xxx}+u_{xyy}+u_{xzz}=0 \label{zk}
\end{equation}
which describes the propagation of nonlinear ionic-sonic waves in a plasma submitted to a magnetic field directed along the $x$ axis. This equation is a three-dimensional
analog of the well-known Korteweg-de Vries (KdV) equation
\begin{equation}\label{kdv}
u_t+uu_x+u_{xxx}=0.
\end{equation}

Equations \eqref{zk}, \eqref{kdv} are typical examples
of so-called
dispersive equations which attract considerable attention
of both pure and applied mathematicians in the past decades. The KdV
equation is probably most studied in this context.
The theory of the initial-value problem
(IVP henceforth)
for \eqref{kdv} is considerably advanced today
\cite{bona2,kato,ponce2,saut2,temam1}.

Recently, due to physics and numerics needs, publications on initial-boundary value
problems to \eqref{kdv} both  in bounded and unbounded domains for dispersive equations have appeared
\cite{bona1,larkin,lar2,wang}. In
particular, it has been discovered that the KdV equation posed on a
bounded interval possesses an implicit internal dissipation. This allowed
to prove the exponential decay rate of small solutions for
\eqref{kdv} posed on unbounded intervals without adding any
artificial damping term \cite{larkin}. Similar results were proved
for a wide class of dispersive equations of any odd order with one
space variable \cite{familark}.

However, \eqref{kdv} is a satisfactory approximation for real waves phenomena while the
equation is posed on the whole line ($x\in\mathbb{R}$); if
cutting-off domains are taken into account, \eqref{kdv} is no longer
expected to mirror an accurate rendition of reality. The correct
equation in this case (see, for instance, \cite{bona1})
should be written as
\begin{equation}\label{1.3}
u_t+ u_x+uu_x+u_{xxx}=0.
\end{equation}
Indeed, if $x\in\R,\ t>0$,
the linear traveling term $u_x$ in \eqref{1.3} can be easily scaled
out by a simple change of variables, but it can
not be safely ignored for problems posed  both on finite and semi-infinite intervals without
changes in the original domain.

Once bounded domains are considered
as a spatial region of waves propagation, their sizes appear to be
restricted by certain critical conditions.
 We recall, however, that
if the transport term $u_x$ is neglected, then \eqref{1.3} becomes \eqref{kdv}, and it is possible to prove the
exponential decay rate of small solutions for \eqref{kdv} posed
on any bounded interval.
More  results on control and stabilizability for the KdV equation can
be found in \cite{rosier1,rozan}.

Later, the interest on dispersive equations became to be extended for
multi-dimensional models such as Kadomtsev-Petviashvili (KP)
and ZK equations.
As far as the ZK equation is concerned,
 results  both on IVP and IBVP can be found in
\cite{faminski,faminski2,farah,pastor,pastor2,saut,ribaud,temam}.
The biggest part of these publications is devoted to study of well-posedness of the Cauchy problem and initial-boundary value problems for the 2D ZK equation
\cite{faminski, faminski2, farah,pastor, pastor2}. In the case of the 3D ZK equation, there are results on local well- posedness for the Cauchy problem \cite{saut,ribaud}; the existence of local strong solutions to an initial- boundary value problem posed on a bounded domain, \cite{wang}, as well as the existence of global weak solutions \cite{temam}.

 Our work has
been inspired by \cite{temam} where \eqref{zk} posed  on an unbounded domain was considered. A thorough analysis of these papers has revealed that an implicit dissipativity of the terms $u_{xyy}+u_{xzz}$ may help to establish a global well-posedness of initial-boundary value problems in classes of regular solutions. Yearlier this dissipativity has been used in order to prove exponential decay for the 2D ZK equation \cite{larh1,larkintronco}.

The main goal of our work is to prove the existence and uniqueness
of global-in-time regular solutions of \eqref{zk} posed  on
unbounded domains and the exponential decay rate of
these solutions for sufficiently small initial data. To cope with this problem, we exploited the strategy completely different from the standard schemes: first to prove the existence result and after that to study uniqueness and decay properties of solutions. In our case, we prove simultaneously existence of global regular solutions and their exponential decay.

The paper is outlined as follows. Section I is Introduction. Section 2 contains  formulation
of the problem and auxiliaries. In Section \ref{existence}, we prove the existence of global regular solutions and, simultaneously, exponential decay of the $H^2$-norm. In section \ref{uniqueness} uniqueness of a regular solutions and continuous dependence on initial data are proven.

\section{Problem, Preliminares and Main Result}

\hspace{0.5mm}Let $L>0$ be a finite number. Define $\Omega = \{(x,y,z) \in \R^3: x \in (0,L), y \in \R; z\in \R\},\;\;   \s = \R^2$.

Consider in $\Omega \times (0,t)$ the following initial- boundary value problem for the Zakharov-Kuznetsov equation:

	\begin{align}
		&  Lu = u_t + u_x + uu_x + \Delta u_x = 0;  \label{1problema}\\
		&	u(0,y,z,t) = u(L,y,z,t) =  u_x(L,y,z,t) = 0, \label{2problema}\\
		&	u(x,y,z,0) = u_0(x,y,z),\,\,\,\,  (x,y,z) \in \Omega. \label{3problema}
	\end{align}
	
	Hereafter subscripts $u_x,\ u_{xy},$ etc. denote the partial derivatives,
	as well as $\partial_x$ or $\partial_{xy}^2$ when it is convenient.
	Operators $\nabla$ and $\Delta$ are the gradient and Laplacian acting over $\Om.$
	By $(\cdot,\cdot)$ and $\|\cdot\|$ we denote the inner product and the norm in $L^2(\Om),$
	$\|\cdot\|_{H^k}$ stands for the norm in $L^2$-based Sobolev spaces, and $\|u\|_{[2]}^2=\|u_{xx}\|^2 + \|u_{yy} \|^2 + \|u_{zz}\|^2$.

		\begin{thm}\label{main2}Let $L \leq \frac{\pi}{2}$ and $u_0(x,y,z)$ satisfying the following conditions:
			 
			 $$u_0(0,x,y,z) = u_0(L,y,z,t) = u_{0x}(L,y,z,t) = 0,$$
			
			$$\| u_0\| ^2 + \|u_{0yy} \|^2 + \|u_{0yz}\|^2 + \| u_{0zz}\|^2 + J_0 < \infty,$$

			\begin{equation}\label{condition}\|u_0 \|^4 \leq \frac{\pi^2}{8K_1L^2}, \,\,\,\,\,\,\,\,\,\,\,\ J_0^2 \leq \frac{\pi^2}{200K_2 L^2},\end{equation} 		
			
			\noi where $K_1 = 2^{16} 3^3 (1+L)(\frac{2^3}{25}C_1 + 1)$, $C_1 = 2+\frac{2^{13}}{3} \|u_0\|^4$, $K_2 = 2^{19} 3^3(1+L)^6$ and $J_0 = ((1+x), | u_0 + u_{0x} + u_0 u_{0x} + \Delta u_{0x}|^2)$.
						
			\noi Then there exists a unique global regular solutions to (\ref{1problema} - \ref{3problema}):
			
			\begin{equation}u \in L^\infty (0,+\infty; H^2(\Omega)) \cap L^2(0,+\infty;H^3(\Omega)),\end{equation}
			\begin{equation}u_{t} \in L^\infty (0,+\infty; L^2(\Omega)) \cap L^2(0,\infty; H^1(\Omega)),\end{equation}
			\begin{equation}\Delta u_{x} \in L^\infty (0,+\infty; L^2(\Omega)) \cap L^2(0,\infty; H^1(\Omega))\end{equation}
			
			\noi such that
			
			\begin{equation}\|u \|_{H^2(\Omega)}^2(t) + \| u_t \|^2(t) \leq C(L, J_0)e^{-\frac{\chi}{2} t},\;\;\forall t>0,\end{equation}
			
			\noi where $\chi = \frac{\pi^2}{2L^2(1+L)}$.
			
		\end{thm}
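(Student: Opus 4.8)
The plan is to build the solution from a priori estimates performed on a sequence of approximate solutions (constructed, say, by the Faedo--Galerkin method after truncating the unbounded $y,z$-directions to $(-R,R)$ and letting $R\to\infty$, or by parabolic regularization), and to carry out the existence and the exponential decay \emph{simultaneously}: every energy functional is shown to decay at the rate $\chi$ uniformly in the approximation parameter, after which weak-$*$ compactness delivers a solution in the asserted classes. The decisive device throughout is the weighted multiplier $(1+x)$, which converts the boundary dissipation of the ZK operator into genuine \emph{interior} dissipation.

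First I would establish the $L^2$-level estimate. Multiplying \eqref{1problema} by $2(1+x)u$ and integrating over $\Om$, the boundary conditions \eqref{2problema} annihilate every boundary contribution except those generated by the term $(1+x)u_{xxx}$, and repeated integration by parts yields
\begin{equation}
\frac{d}{dt}\int_\Om (1+x)u^2 + 3\|u_x\|^2 + \|u_y\|^2 + \|u_z\|^2 + \int_{\s}u_x^2(0,y,z,t)\,dy\,dz = \|u\|^2 + \frac{2}{3}\int_\Om u^3 .
\end{equation}
The linear term $\|u\|^2$ is dominated by the interior dissipation via the Poincar\'e inequality $\|u_x\|^2\ge(\pi/L)^2\|u\|^2$, which under $L\le\pi/2$ leaves a comfortable margin; the cubic term is handled by Gagliardo--Nirenberg, $\big|\int_\Om u^3\big|\le C\|u\|^{3/2}\|\nabla u\|^{3/2}$, so that after Young's inequality the surviving factor $\|u\|^4\le\|u_0\|^4$ (from the basic nonincreasing $L^2$ bound obtained by testing against $u$ alone) is forced small by the first inequality in \eqref{condition}. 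This closes a differential inequality $\frac{d}{dt}E_0+\chi E_0\le0$ for $E_0=\int_\Om(1+x)u^2$ with $\chi=\frac{\pi^2}{2L^2(1+L)}$, giving $\|u\|^2(t)\le E_0(0)e^{-\chi t}$.

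Next I would differentiate \eqref{1problema} in $t$ to get $u_{tt}+u_{tx}+u_tu_x+uu_{tx}+\Delta u_{tx}=0$ and repeat the weighted computation with the multiplier $2(1+x)u_t$. The linear part reproduces exactly the dissipative structure above with $u$ replaced by $u_t$, while the genuinely new terms $\int_\Om(1+x)u_x u_t^2$ and $\int_\Om(1+x)u\,u_{tx}u_t$ are controlled through the Sobolev embedding $H^2(\Om)\hookrightarrow L^\infty(\Om)$ together with the smallness of the lower-order norms already in hand; the weighted datum $J_0$ is precisely the value at $t=0$ of the resulting functional. Inserting $\Delta u_x=-(u_t+u_x+uu_x)$ back into the equation then recovers $\|\Delta u_x\|$, and elliptic regularity for the boundary value problem in $\Om$ upgrades this to control of the full $H^2$-norm (and the $L^2(0,\infty;H^3)$-norm), so that $\|u\|_{H^2}^2+\|u_t\|^2$ decays at the rate $\chi$; here the second inequality in \eqref{condition} on $J_0$ is exactly what renders the nonlinear terms absorbable.

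The hard part will be this second, higher-order step: the products coming from $(uu_x)_t$ couple the top-order norm to itself, so the estimate only closes as a \emph{bootstrap/continuity argument} in which the explicit smallness thresholds of \eqref{condition}, carrying the constants $K_1,K_2$, guarantee that the solution cannot escape the small-data regime in which dissipation dominates the nonlinearity. Once the uniform-in-time bounds with exponential decay are secured at the approximate level, standard weak-$*$ limits yield the stated regularity. Finally, uniqueness and continuous dependence follow by writing the equation for the difference $w=u-v$ of two solutions, testing against $(1+x)w$, estimating $uu_x-vv_x$ with the $L^\infty$-bounds just obtained, and applying Gronwall's inequality.
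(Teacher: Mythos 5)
Your overall architecture matches the paper in spirit: parabolic regularization, the weighted multiplier $(1+x)$ turning boundary dissipation into interior dissipation, exponential decay proved uniformly at the approximate level and simultaneously with existence, the time-differentiated estimate with multiplier $2(1+x)u_t$, and a continuity-type closure quantified by $K_1,K_2$ (the paper formalizes your ``bootstrap'' as Lemma \ref{lemma3}, applied to $f(t)=((1+x),|u_t|^2)(t)$, with $J_0$ indeed the initial value of that functional). However, two steps as you describe them would fail. In the $u_t$-estimate you propose to control the new nonlinear terms $\int_\Om(1+x)u_xu_t^2$ and $\int_\Om(1+x)u\,u_{tx}u_t$ ``through $H^2(\Om)\hookrightarrow L^\infty(\Om)$ together with the smallness of the lower-order norms already in hand''; but at that stage no $H^2$ bound is available --- it is produced only later --- so this is circular, and in any case the hypotheses \eqref{condition} impose smallness only of $\|u_0\|$ and $J_0$, not of any $H^2$ norm, so an $L^\infty$-based absorption would prove a weaker theorem. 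The paper instead uses $L^4$-interpolation (Lemma \ref{lemma1}) and, crucially, the ``static'' reading of the weighted identity, estimate \eqref{1e3}, namely $\|u_x\|^2(t)\le \frac25 C_1\|u\|^2(t)+\frac25(1+L)((1+x),|u_t|^2)(t)$, which makes the differential inequality for $f(t)$ autonomous, i.e.\ of the form \eqref{1e5}, to which Lemma \ref{lemma3} applies; that is exactly where $K_1$ and $K_2$ arise.

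The second and larger gap is your claim that ``elliptic regularity'' upgrades $\Delta u_x=-(u_t+u_x+uu_x)\in L^2$ to full $H^2$ (and $L^2(0,\infty;H^3)$) control. On the slab $\Om$ the only boundary data are $u(0)=u(L)=u_x(L)=0$, so the elliptic problem for $v=u_x$ carries Dirichlet data at $x=L$ but an \emph{unknown} trace $\phi=u_x(0,y,z,t)$ at $x=0$; moreover $\Delta u_x\in L^2$ by itself does not control the individual derivatives $u_{yy},u_{zz},u_{yz},u_{xx}$. The paper must therefore run three further energy estimates that your outline omits entirely: the multiplier $-2(1+x)(u_{yy}+u_{zz})$ (Estimate V), which yields decay of $\|u_{yy}\|,\|u_{zz}\|,\|u_{yz}\|,\|u_{xy}\|,\|u_{xz}\|$ and, essentially, $H^1(\s)$-control of the trace $\phi$ via the boundary terms in \eqref{5e5}; the multiplier $(1+x)\bigl(\partial_y^4+\partial_y^2\partial_z^2+\partial_z^4\bigr)u$ (Estimate VI), giving the $L^2$-in-time bounds \eqref{E7} on third-order transverse derivatives; and a separate ODE-in-$x$ argument (Estimate VII) to extract $\|u_{xx}\|$ at the regularized level. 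Only after these, writing $v=u_x-\phi(y,z,t)(L-x)$, does the elliptic step close and deliver $u\in L^\infty(0,\infty;H^2)\cap L^2(0,\infty;H^3)$; this is also precisely where the hypotheses $\|u_{0yy}\|,\|u_{0yz}\|,\|u_{0zz}\|<\infty$ (finite, not small) enter, which your proposal never uses. The uniqueness step you sketch (difference $w$, multiplier $(1+x)w$, Gronwall) does agree with the paper, except that the paper again uses $L^4$-interpolation rather than $L^\infty$ bounds.
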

		
		We will need the following results:

\begin{lemma}\label{lemma1}
	Let $u\in H^1(\Om)$ and $\gamma$ be the boundary of $\Om.$
	
	If $u|_{\gamma}=0,$ then
	\begin{equation}\label{lady1}
		\|u\|_{L^q(\Om)}\le 4^{\theta}\|\nabla u\|^{\theta}\|u\|^{1-\theta},
	\end{equation}
	where  $\theta=3\big(\frac12-\frac1q\big).$
	
	If $u|_{\gamma}\ne0,$ then
	\begin{equation}\label{lady2}
		\|u\|_{L^q(\Om)}\le4^{\theta} C_{\Om}\|u\|^{\theta}_{H^1(\Om)}\|u\|^{1-\theta},
	\end{equation}
	where $C_{\Om}$ does not depend on a size of $\Om$.
\end{lemma}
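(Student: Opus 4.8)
The plan is to recognise both inequalities as the three-dimensional Gagliardo--Nirenberg multiplicative inequality and to reduce them to a single Sobolev-type estimate for the $L^6$-norm followed by an $L^2$--$L^6$ interpolation. First I would settle the endpoint estimate $\|w\|_{L^6(\R^3)}\le 4\|\nabla w\|_{L^2(\R^3)}$ for $w\in H^1(\R^3)$ decaying at infinity. This follows from the classical line-integration argument: for each coordinate direction the fundamental theorem of calculus gives $|w|\le\int_\R|\partial_i w|\,dx_i$, so that $|w|^{3/2}$ is bounded by the product over $i=1,2,3$ of $\bigl(\int_\R|\partial_i w|\,dx_i\bigr)^{1/2}$; integrating over $\R^3$ and invoking the generalized H\"older (Gagliardo) lemma, each factor depending on only two of the three variables, yields $\|w\|_{L^{3/2}}^{3/2}\le\prod_i\|\partial_i w\|_{L^1}^{1/2}$. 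Applying this with $w$ replaced by $|u|^4$ and using $\||u|^3\|_{L^2}=\|u\|_{L^6}^3$ together with $\|\partial_i u\|_{L^2}\le\|\nabla u\|_{L^2}$ produces exactly the constant $4$; the numerical value $4$ is inherited from the exponent $4$ in $|u|^4$.

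Next, for the case $u|_\gamma=0$ I would extend $u$ by zero across the two faces $\{x=0\}$ and $\{x=L\}$. Since $u$ vanishes there, the zero extension $\bar u$ belongs to $H^1(\R^3)$ and satisfies $\|\bar u\|_{L^q(\R^3)}=\|u\|_{L^q(\Om)}$, $\|\bar u\|_{L^2}=\|u\|$ and $\|\nabla\bar u\|_{L^2}=\|\nabla u\|$. Interpolating between $L^2$ and $L^6$ via the H\"older relation $\frac1q=\frac{1-\theta}{2}+\frac{\theta}{6}$, which is equivalent to $\theta=3(\frac12-\frac1q)$ and carries interpolation constant one, I obtain $\|u\|_{L^q(\Om)}=\|\bar u\|_{L^q(\R^3)}\le\|\bar u\|_{L^2}^{1-\theta}\|\bar u\|_{L^6}^\theta\le 4^\theta\|\nabla u\|^\theta\|u\|^{1-\theta}$, which is \eqref{lady1}.

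For the case $u|_\gamma\ne0$ the same scheme applies once $u$ is replaced by a bounded $H^1$-extension to $\R^3$. The new feature is that the fundamental theorem of calculus in the bounded $x$-direction now produces a boundary/trace contribution, which is precisely what turns the pure gradient factor $\|\nabla u\|^\theta$ into the full norm $\|u\|_{H^1(\Om)}^\theta$ and generates the constant $C_\Om$; the $y$- and $z$-directions are unchanged because $u$ still decays there. Concretely, I would extend $u$ across $\{x=0\}$ and $\{x=L\}$ by even reflection, a scale-invariant operation: it multiplies the $L^2$-norm and the gradient norm by a fixed factor independent of the width $L$, and this is exactly the mechanism that keeps $C_\Om$ independent of the size of $\Om$.

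I expect the main obstacle to be this last point -- proving that $C_\Om$ does not depend on $L$. A naive extension by a cutoff in $x$ is fatal here, since the derivative of a cutoff over an interval of length $L$ scales like $1/L$ and would force $C_\Om$ to blow up as $L\to0$; only a reflection-type (scale-invariant) extension avoids this. The remaining care is bookkeeping: the reflected function lives on a slab rather than on all of $\R^3$, so the $x$-line-integration must be replaced by an averaged version in which the mean of $u^2$ over the width reproduces $\|u\|$ and the integral of $|u||u_x|$ reproduces the remaining $H^1$ contribution, with every constant traced to confirm that it is $L$-independent and that the overall multiplicative constant stays $4^\theta C_\Om$.
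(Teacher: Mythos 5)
Your first half (the case $u|_\gamma=0$) is correct, and it is worth noting that the paper itself offers no proof at all here --- it simply cites Ladyzhenskaya \cite{lady,lady2} --- so what you wrote is essentially a reconstruction of the classical argument from those references: the Gagliardo lemma $\|w\|_{L^{3/2}(\R^3)}^{3/2}\le\prod_{i=1}^{3}\bigl(\int_{\R^3}|\partial_i w|\,dx\bigr)^{1/2}$ applied to $w=|u|^4$ gives $\|u\|_{L^6}^4\le\prod_i\bigl(4\|u\|_{L^6}^3\|\partial_i u\|\bigr)^{1/3}$, hence $\|u\|_{L^6}\le 4\prod_i\|\partial_i u\|^{1/3}\le 4\|\nabla u\|$; the zero extension across the faces $x=0$ and $x=L$ stays in $H^1(\R^3)$ because the traces vanish; and H\"older interpolation between $L^2$ and $L^6$ with $\frac1q=\frac{1-\theta}{2}+\frac{\theta}{6}$, i.e. $\theta=3\bigl(\frac12-\frac1q\bigr)$, yields \eqref{lady1} with exactly the constant $4^\theta$. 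No objection to any of that.

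The case $u|_\gamma\ne0$ contains a genuine gap, located precisely where you anticipated trouble. First, even reflection does not do what you claim: reflecting across $x=0$ produces a function on the slab $(-L,L)\times\R^2$, not on $\R^3$; iterating reflections across both faces produces a periodic-in-$x$ function that is not square integrable on $\R^3$, and truncating it by a cutoff adapted to the slab width reintroduces the $1/L$ factor you were trying to avoid (a cutoff over an $O(1)$ transition length avoids that particular $1/L$, but then the periodic extension contributes $O(1/L)$ many copies of $\|u\|^2$, so the thickness dependence survives either way). Second, and more decisively, your closing promise --- to trace every constant and confirm it is independent of $L$ --- cannot be kept, because \eqref{lady2} with a constant uniform as $L\to0$ is simply false. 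Test it on $u(x,y,z)=v(y,z)$ with $v\in C_0^\infty(\R^2)$: then $\|u\|_{L^q(\Om)}=L^{1/q}\|v\|_{L^q(\R^2)}$, while $\|u\|^{\theta}_{H^1(\Om)}\|u\|^{1-\theta}=L^{1/2}\|v\|^{\theta}_{H^1(\R^2)}\|v\|^{1-\theta}_{L^2(\R^2)}$, so any admissible constant must satisfy $C_\Om\ge c_v\,L^{1/q-1/2}$, which blows up as $L\to0$ since $q>2$. Consequently your averaged fundamental-theorem step $u^2(x,y,z)\le\frac1L\int_0^L u^2\,d\xi+2\int_0^L|u|\,|u_\xi|\,d\xi$ is the right tool, but the $1/L$ it carries is intrinsic, not bookkeeping. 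The clause ``$C_\Om$ does not depend on a size of $\Om$'' must be read (as in Ladyzhenskaya's books, where the constant depends on cone parameters of the domain) as independence of the \emph{unbounded extent} of $\Om$ --- uniformity in the infinite $y,z$-directions, and in $L$ bounded away from zero --- and with that reading your scheme does close: one reflection plus the averaged step in $x$ and decay in $y,z$ gives \eqref{lady2} with $C_\Om$ controlled by a fixed multiple of $\max\bigl(1,L^{-1/2}\bigr)^{\theta}$, which is all the paper needs since $L$ is fixed there. As written, however, your claim that reflection is ``exactly the mechanism that keeps $C_\Om$ independent of the size of $\Om$'' is incorrect.
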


\proof{See \cite{lady,lady2}.

\begin{lemma} \label{steklov} Let $v \in H^1_0(0,L).$ Then
	\begin{equation}\label{Estek} 
\|v_x\|^2(t)\geq \frac {\pi^2}{L^2}\|v\|^2(t).
	\end{equation}
\end{lemma}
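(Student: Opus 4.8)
The plan is to prove this sharp Poincar\'e--Steklov inequality by expanding $v$ in the Fourier sine basis of $L^2(0,L)$, which is exactly the eigenbasis of the Dirichlet Laplacian on the interval (here the argument $t$ plays no role and $v$ is treated as a purely spatial function). By density of $C_0^\infty(0,L)$ in $H^1_0(0,L)$ it suffices to establish \eqref{Estek} for smooth $v$ vanishing at the endpoints and then pass to the limit, since both sides are continuous in the $H^1$-norm. For such $v$ I would write
\begin{equation}
v(x)=\sum_{n=1}^{\infty} b_n \sin\!\Big(\frac{n\pi x}{L}\Big),
\qquad
b_n=\frac{2}{L}\int_0^L v(x)\sin\!\Big(\frac{n\pi x}{L}\Big)\,dx,
\end{equation}
using that every function $\sin(n\pi x/L)$ vanishes at $x=0$ and $x=L$ and so the expansion respects the boundary condition $v|_{\partial(0,L)}=0$.

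Next I would compute both norms via Parseval's identity. The key step for the left-hand side is to verify that the cosine Fourier coefficients of $v_x$ are precisely $(n\pi/L)\,b_n$; this follows by integrating $\int_0^L v_x\cos(n\pi x/L)\,dx$ by parts, the boundary terms vanishing exactly because $v(0)=v(L)=0$. This produces
\begin{equation}
\|v\|^2=\frac{L}{2}\sum_{n=1}^{\infty} b_n^2,
\qquad
\|v_x\|^2=\frac{L}{2}\sum_{n=1}^{\infty}\Big(\frac{n\pi}{L}\Big)^2 b_n^2 .
\end{equation}
Since $n\ge 1$ forces $(n\pi/L)^2\ge \pi^2/L^2$ in every term, a term-by-term comparison immediately gives
\begin{equation}
\|v_x\|^2=\frac{L}{2}\sum_{n=1}^{\infty}\Big(\frac{n\pi}{L}\Big)^2 b_n^2
\ge \frac{\pi^2}{L^2}\cdot\frac{L}{2}\sum_{n=1}^{\infty} b_n^2
=\frac{\pi^2}{L^2}\|v\|^2,
\end{equation}
which is the claim; equality holds precisely when only the first mode survives, i.e. $v(x)=b_1\sin(\pi x/L)$, so the constant is sharp. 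Equivalently, the whole computation is just the Rayleigh-quotient characterization $\min_{v\in H^1_0}\|v_x\|^2/\|v\|^2=\lambda_1=\pi^2/L^2$, the smallest Dirichlet eigenvalue of $-\partial_x^2$.

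The only genuinely delicate point is the justification of the term-by-term differentiation, namely that for $v\in H^1_0(0,L)$ the $L^2$-convergent series for $v_x$ has coefficients $(n\pi/L)b_n$. This is exactly where the homogeneous boundary data enter, through the vanishing of the boundary terms in the integration by parts above; once the smooth case is settled, the general case follows from the density reduction. Everything else is routine Parseval bookkeeping.
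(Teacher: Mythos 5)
Your proof is correct, and it takes a genuinely different (more self-contained) route than the paper. The paper's proof is a two-line reduction: it quotes the classical Steklov inequality on the reference interval $(0,\pi)$, namely $\int_0^{\pi}v_t^2\,dt\ge\int_0^{\pi}v^2\,dt$ for $v\in H^1_0(0,\pi)$, as a known fact, and obtains \eqref{Estek} by a simple change of variables $x\mapsto \pi x/L$, which rescales the constant to $\pi^2/L^2$. You instead prove the inequality from first principles directly on $(0,L)$, expanding in the Dirichlet sine eigenbasis and using Parseval; your integration by parts showing that the cosine coefficients of $v_x$ are $(n\pi/L)b_n$ -- with boundary terms killed by $v(0)=v(L)=0$, and with vanishing mean since $\int_0^L v_x\,dx=v(L)-v(0)=0$, so the $n=0$ cosine mode drops out -- is precisely the standard proof of the Steklov inequality that the paper invokes as a black box. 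In effect your argument subsumes the paper's: what their scaling step accomplishes, your spectral computation does uniformly in $L$. What each approach buys: the paper's version is shorter if one accepts the classical inequality without proof; yours is fully self-contained, makes transparent where the homogeneous boundary conditions enter, identifies the extremizer $v(x)=b_1\sin(\pi x/L)$, and establishes sharpness of the constant $\pi^2/L^2$ via the Rayleigh-quotient characterization, none of which the citation-plus-scaling argument exhibits. Your density reduction from $C_0^\infty(0,L)$ to $H^1_0(0,L)$ and the Parseval bookkeeping are both sound, so there is no gap.
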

	
	\begin{proof} The proof is based on the Steklov inequality: let $v(t)\in H^1_0(0,\pi)$, then $\int_0^{\pi}v_t^2(t)\,dt\geq\int_0^{\pi}v^2(t)\,dt.$
		Inequality \eqref{Estek} follows by a simple scaling.
	\end{proof}
	
\begin{lemma}\label{lemma3}
Let $f(t)$ be a continuous  positive function such that

\begin{align} & f'(t) + (\alpha - k f^n(t)) f(t) \leq 0.\label{lemao1}\\
& \alpha - k f^n(0)> 0.\label{lemao2}\end{align}
\noi Then

\begin{equation}f(t) < f(0)\end{equation}

\noi for all $t > 0$.
\end{lemma}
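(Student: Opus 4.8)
The plan is to argue by contradiction via a first-crossing (continuity) argument, the whole point being that the effective damping coefficient $\alpha - kf^n$ is monotone decreasing in $f$, so that smallness of $f$ is self-reinforcing.

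First I would record the behaviour at the origin. Since $f$ is positive and, by \eqref{lemao2}, $\alpha - kf^n(0) > 0$, inequality \eqref{lemao1} evaluated at $t=0$ gives
\begin{equation*}
f'(0) \le -\bigl(\alpha - kf^n(0)\bigr)f(0) < 0,
\end{equation*}
so $f$ is strictly decreasing near $t=0$ and in particular $f(t) < f(0)$ for all sufficiently small $t>0$. This provides the initial foothold.

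Next, suppose for contradiction that the conclusion fails, i.e.\ that the set $\{t>0 : f(t) \ge f(0)\}$ is nonempty. By continuity of $f$ and the previous step its infimum $t_0$ is strictly positive, satisfies $f(t_0)=f(0)$, and obeys $f(t)<f(0)$ for all $t \in (0,t_0)$. On the whole interval $[0,t_0]$ we then have $f(t)\le f(0)$; since $s\mapsto ks^n$ is increasing for positive arguments (with $k>0$), this yields
\begin{equation*}
\alpha - kf^n(t) \ge \alpha - kf^n(0) > 0, \qquad t\in[0,t_0].
\end{equation*}
Combined with $f(t)>0$ and \eqref{lemao1}, this forces $f'(t)<0$ throughout $[0,t_0]$, so $f$ is strictly decreasing there. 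Integrating (or simply invoking strict monotonicity) gives $f(t_0)<f(0)$, contradicting $f(t_0)=f(0)$.

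Hence the set is empty and $f(t)<f(0)$ for every $t>0$, which is the claim. The argument is essentially elementary; the only point requiring care is the rigorous definition of the first crossing time $t_0$ and the observation that the monotonicity of $s\mapsto ks^n$ is exactly what keeps the damping coefficient positive on $[0,t_0]$ --- this feedback between smallness of $f$ and positivity of $\alpha - kf^n$ is the heart of the lemma. I assume here, as the applications require, that $\alpha,k>0$ and $n\ge 1$, and that the continuity of $f$ together with \eqref{lemao1} is understood so that $f'$ makes sense where needed.
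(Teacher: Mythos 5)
Your proof is correct and takes essentially the same route as the paper's: both start from $f'(0)<0$ giving $f<f(0)$ near $t=0$, then derive a contradiction at a first time where $f$ returns to $f(0)$, the key point in each case being that $f\le f(0)$ up to that time keeps the damping coefficient $\alpha-kf^n(t)\ge\alpha-kf^n(0)>0$. The only cosmetic difference is that the paper integrates \eqref{lemao1} over $[0,\tau]$ and contradicts the positivity of the resulting integral, while you argue pointwise that $f'<0$ forces strict decrease; your write-up is, if anything, more careful than the paper's rather telegraphic version.
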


\proof{  Obviously, $f'(0) + (\alpha - k f^n(0)) f^n(0) \leq 0$. Since $f$ is continuous, there exists $T>0$ such that $f(t)<f(0)$ for every $t \in [0,T]$. Suppose that there is $\tau>0$ and $f(0) = f(\tau)$. Integrating \eqref{lemao1}, we find
	
	$$ \int_0^\tau (\alpha - k f^n(t)) f^n(t) \, dt < 0$$
	
	that contradicts \eqref{lemao2}. Therefore, $f(t) < f(0)$ for all $t > 0.$

(See also \cite{temam3}}).\\
The proof of Lemma 2.2 is complete.  $\Box$

\section{Existence of regular solutions}
\label{existence}
{ \bf Regularized problem.} To solve (\ref{1problema})-(\ref{3problema}), we exploit the parabolic regularization of this problem as follows:

For $\epsilon > 0$ (small), consider in $\Omega\times(0,t)$  the following parabolic problem:

	\begin{align}& L_\epsilon u_m^\epsilon = u_{mt}^\epsilon + u_{mx}^\epsilon + u_m^\epsilon u_{mx}^\epsilon + \Delta u_{mx}^\epsilon + \epsilon \partial^4 u_m^\epsilon = 0,\label{1para}\\
		&	u_m^\epsilon(0,y,z,t) =  u_m^\epsilon(L,y,z,t) =  u_{mx}^\epsilon(L,y,z,t) \nonumber\\ 
		&= \epsilon u_{mxx}^\epsilon(0,y,z,t) =  0 \label{2para},\\
		&	u_m^\epsilon(x,y,z,0) = u_{0m}(x,y,z),\\
		&  u_{0m}(0,y,z) = \epsilon u_{0mxx}(0,y,z)=u_{0m}(L,y,z)=u_{0mx}(L,y,z)= 0,\label{3para}
\end{align}
		
		\noi where
		
		$$\partial^4 u_m^\epsilon = \frac{\partial^4}{\partial x^4} u_m^\epsilon + \frac{\partial^4}{\partial y^4} u_m^\epsilon + \frac{\partial^4}{\partial z^4} u_m^\epsilon,$$
		
		\noi $u_0$ is an independent of $\epsilon$ approximation of $u_0$ such that for all $m \in \N$.\\
		
		Define 
\begin{align}& J_{0m}^\epsilon = ((1+x),|u_{0m^\epsilon} + u_{0mx}^\epsilon + u_{0m}^\epsilon u_{0mx}^\epsilon + \Delta u_{0mx}^\epsilon + \epsilon \partial^4 u_{0m}^\epsilon|^2)\\
	&J_{0m} = ((1+x), | u_{0m} + u_{0mx} + u_{0m}u_{0mx} + \Delta u_{0mx}|^2 ).	\end{align}

		It is known \cite{lady2, temam1, temam2} that there exists a unique regular solution of (\ref{1para})-(\ref{3para}), provided $u_{0m}$ is sufficiently smooth.
		
		\noindent Our goal is to obtain estimates for the $u_m^\epsilon$  independent of $m$ and $\epsilon$ with $u_{0m}$ sufficiently smooth, fixed; then to pass the limit as $\epsilon$ tends to $0$ getting a solution to (\ref{1problema})-(\ref{3problema}) with initial data $u_{0m}$. After that we pass to the limit as $m$ tends to $\infty$ and $u_{0m}$ tends to $u_0$, obtaining a solution to the original problem.
		
	We will assume that $u_{0m}$ converges to $u_0$ in the following sense:
	\begin{align*} J_{0m} \to J_0, \,\ ,\| u_{0myy} \| \to \| u_{0yy}\| \, \, \| u_{0myz} \| \to \| u_{0yz}\|,\,\, \| u_{0mzz} \| \to \| u_{0zz}\|, 
	\end{align*}
	\noi as $m \to \infty$. 	
		
		We assume that $m \geq m^*$, where $m^*$ is af natural number such that $J_{0m} \leq 2 J_0$. In turn, $\epsilon > 0$ is sufficiently small such that $J_{0m}^\epsilon \leq 4J_0$.
			
		\begin{lemma}\label{lem1}Under the conditions of Theorem \ref{main2}, for $m$ sufficiently large and $\epsilon$ sufficiently small, the following independent of $\epsilon$ and $m$ estimates hold:
			\begin{equation}\begin{cases}\label{est1}u_m^\epsilon \text{ is bounded in }L^{\infty}(0,T; L^2(\Om))\cap L^2(0,T; H^1(\Om)),\\
				u_{mx}^\epsilon \text{ is bounded in }L^2(0,T;L^2(\s)).\end{cases}\end{equation}
		\end{lemma}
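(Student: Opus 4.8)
The plan is to obtain two energy identities that are uniform in $\epsilon$ and $m$: the first by testing the regularized equation \eqref{1para} against $u_m^\epsilon$, the second against the weighted multiplier $(1+x)u_m^\epsilon$. The weight is the device that turns the ``implicit dissipation'' of the dispersive part $\Delta u_{mx}^\epsilon$ into genuine interior control of $\nabla u_m^\epsilon$. Throughout I abbreviate $u=u_m^\epsilon$. \emph{First step.} Multiplying \eqref{1para} by $u$ and integrating over $\Om$, the boundary conditions \eqref{2para} make the transport term $\int_\Om u_x u$ and the nonlinear term $\int_\Om uu_x\,u$ reduce to boundary contributions that vanish; integrating the dispersive term by parts in each variable leaves only the trace $\tfrac12\|u_x(0,\cdot,\cdot)\|_{L^2(\s)}^2$ (the $u_{xyy}$ and $u_{xzz}$ pieces vanish because $u=0$ on the lateral boundary), while the regularization gives $\eps\|u\|_{[2]}^2\ge0$ after using $\eps u_{xx}(0,\cdot,\cdot)=0$. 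This produces
\begin{equation*}
\frac{d}{dt}\|u\|^2 + \|u_x(0,\cdot,\cdot)\|_{L^2(\s)}^2 + 2\eps\|u\|_{[2]}^2 = 0,
\end{equation*}
whence $\|u\|^2(t)\le\|u_0\|^2$ (the $L^\infty(0,T;L^2)$ bound) and $\int_0^T\|u_x(0,\cdot,\cdot)\|_{L^2(\s)}^2\,dt\le\|u_0\|^2$ (the $L^2(0,T;L^2(\s))$ bound for $u_{mx}^\epsilon$) follow immediately, with no smallness assumption needed.

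\emph{Second step.} Multiplying \eqref{1para} by $(1+x)u$ and integrating, the $x$-derivative of the weight now survives the integrations by parts: the dispersive term yields the positive interior contribution $\tfrac32\|u_x\|^2+\tfrac12\|u_y\|^2+\tfrac12\|u_z\|^2$ plus the trace $\tfrac12\|u_x(0,\cdot,\cdot)\|_{L^2(\s)}^2$, the transport term gives $-\tfrac12\|u\|^2$, the nonlinearity gives $-\tfrac13\int_\Om u^3$, and the regularization again contributes nonnegative principal terms (modulo boundary terms controlled by $\eps u_{xx}(0,\cdot,\cdot)=0$ and $u_x(L,\cdot,\cdot)=0$). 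Discarding the nonnegative trace and regularization terms leaves the inequality
\begin{equation*}
\frac12\frac{d}{dt}\big((1+x),u^2\big) + \frac12\|\nabla u\|^2 + \|u_x\|^2 \le \frac12\|u\|^2 + \frac13\Big|\int_\Om u^3\Big|.
\end{equation*}

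\emph{Third step.} I would control the cubic term by Lemma \ref{lemma1} with $q=3$, $\theta=\tfrac12$, giving $\|u\|_{L^3}^3\le 8\|\nabla u\|^{3/2}\|u\|^{3/2}$; Young's inequality then yields $\tfrac13|\int_\Om u^3|\le\tfrac14\|\nabla u\|^2 + C\|u\|^6$, and the first-step bound $\|u\|^2\le\|u_0\|^2$ converts $\|u\|^6$ into $\|u_0\|^4\|u\|^2$. Absorbing $\tfrac14\|\nabla u\|^2$ on the left and invoking the Steklov inequality (Lemma \ref{steklov}) in $x$, namely $\|u_x\|^2\ge\tfrac{\pi^2}{L^2}\|u\|^2\ge4\|u\|^2$ since $L\le\tfrac\pi2$, the remaining $\big(\tfrac12+C\|u_0\|^4\big)\|u\|^2$ is dominated by $\|u_x\|^2$; this is exactly where the condition $\|u_0\|^4\le\tfrac{\pi^2}{8K_1L^2}$ enters, keeping the leftover coefficient strictly positive. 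One then has $\frac{d}{dt}\big((1+x),u^2\big) + c\|\nabla u\|^2\le0$ with $c>0$ independent of $\epsilon,m$. Integrating in $t$ and using $0\le\|u\|^2\le\big((1+x),u^2\big)\le(1+L)\|u_0\|^2$ gives $\int_0^T\|\nabla u\|^2\,dt\le C(L)\|u_0\|^2$, which together with the $L^\infty(0,T;L^2)$ bound furnishes the $L^2(0,T;H^1(\Om))$ estimate. All constants depend only on $L$ and $\|u_0\|$ (with $J_{0m}\le 2J_0$, $J_{0m}^\epsilon\le4J_0$), so the bounds are uniform in $\epsilon$ and $m$.

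The main obstacle is the cubic term $\int_\Om u^3$: its treatment forces simultaneously the Ladyzhenskaya--Gagliardo--Nirenberg inequality and the smallness of $\|u_0\|$, so that after Young's and Steklov's inequalities the positive dissipation $\tfrac12\|\nabla u\|^2+\|u_x\|^2$ is not entirely consumed. A secondary, purely bookkeeping difficulty is verifying that the $\eps$-regularization boundary terms generated by the multiplier $(1+x)u$ are nonnegative up to quantities that vanish uniformly as $\eps\to0$, which is where the boundary conditions $\eps u_{xx}(0,\cdot,\cdot)=0$ and $u_x(L,\cdot,\cdot)=0$ are essential.
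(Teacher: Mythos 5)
Your proposal is correct and follows essentially the same route as the paper: Estimate I with multiplier $u_m^\epsilon$ giving the $L^\infty(0,T;L^2)$ bound and the trace bound, then Estimate II with the weighted multiplier $(1+x)u_m^\epsilon$, the cubic term handled by Lemma \ref{lemma1} with $\theta=\tfrac12$ plus Young, and the Steklov inequality together with the smallness condition \eqref{condition} absorbing the leftover $\|u\|^2$-terms into the dissipation. The only difference is cosmetic: the paper pushes Estimate II one step further to extract the exponential decay \eqref{e2} (used in later estimates), whereas you stop at the uniform boundedness, which is all the lemma itself asserts.
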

		
			\proof
				{\bf Estimate I.}
				Multiply (\ref{1para}) by $u_m^\epsilon$ and integrate over $\Omega \times (0,t)$ to obtain
		
		\begin{equation*} \| u_m^\epsilon \|^2(t) + \int_0^t \int_\s (u_{mx}^\epsilon)^2(0,y,z,\tau)\, dy \, dz \, d\tau + 2\epsilon\int_0^t \| u_m^\epsilon \|_{[2]}^2(t) ds \leq \|u_{0m} \|^2
		\end{equation*}
		
		\noi and for $m$ sufficiently large
		
		\begin{equation}\label{last1e}\|u_m^\epsilon\|^2 (t) + \int_0^t \int_\s (u_{mx}^\epsilon)^2(0,y,z,\tau) \, d\tau \, ds  \leq 2 \|u_0\|^2.\end{equation}
		
		{\bf Estimate II}.
		Dropping the indices $m, \epsilon$, we transform the scalar product
				
		\begin{equation}2(L_\epsilon u_m^\epsilon, (1+x) u_m^\epsilon)(t) = 0 \end{equation}
		
		\noindent into the following equality:
		
		\begin{align}& \frac{d}{dt} ((1+x), u^2) (t) + (1-\epsilon)\int_\s u_x^2(0,y,z,t)\, dy \, dz - \|u\|^2(t)\nonumber \\ & + 3 \|u_x \|^2(t)+ \|u_y\|^2(t) + \|u_z\|^2(t) + 2\epsilon \mathcal{P}_1(t) = \frac{2}{3}(1,u^3)(t),\label{pree2}
		\end{align} 
		
		\noi where $\mathcal{P}_1 = ((1+x), u_{xx}^2 + u_{yy}^2 + u_{zz}^2)(t).$
		
Making use of \eqref{lady1}, we find
\begin{align}
	I=&\frac{2}{3}(1,u^3)(t)
	\le \frac{2}{3}\|u\|^3_{L^3(\Omega)}(t)\le\frac{2^4}{3}\big[\|\nabla u\|^{1/2}(t)\|u\|^{1/2}(t)\big]^3\nonumber\\
	&\le\frac{1}{2}\|\nabla u\|^2(t)+\frac{2^{11}}{3}\|u\|^6(t).\label{2pree2}
\end{align}
		
\noi By Lemma \ref{steklov},		
\begin{equation}\label{psteklov}\|u_x\|^2 (t) \geq \frac{\pi^2}{L^2} \|u\|^2(t).\end{equation}
	
\noi Substituting (\ref{last1e}), (\ref{2pree2}) and (\ref{psteklov}) into  \eqref{pree2}, we obtain for a fixed, sufficiently large $m$ that
		$$\frac{d}{dt} ((1+x), u^2)(t) + \big[ \frac{2\pi^2}{L^2} - 1 - \frac{2^{13}}{3} \|u_0\|^4 \big] \| u \|^2 (t) \leq 0.$$
		
\noi Under conditions of Theorem \ref{main2}, we have 

\begin{align}\label{3pree2}\frac{\pi^2}{L^2} - 1 - \frac{2^{13}}{3} \|u_0\|^4 \geq 0.\end{align}

Hence

$$\frac{d}{dt}((1+x),u^2)(t) + \frac{\pi^2}{L^2} ((1+x), u^2) (t) \leq 0$$

and

\begin{equation}\label{e2}\|u_m^\epsilon \|^2(t) \leq  2(1+L) \|u_0 \|^2 e^{-\chi t},\end{equation}
where $\chi = \frac{\pi^2}{L^2(1+L)}$. Returning to (\ref{pree2}), using (\ref{2pree2}) and (\ref{3pree2}), we obtain
\begin{align*}\int_0^t \big\{ \| \nabla u_m^\epsilon \|^2(\tau) + \int_\s u_{mx}^\epsilon(0,y,z,\tau)\, dy \, dz  \big\} \, d\tau \leq 2(1+L) \|u_0\|^2.\end{align*}

\begin{lemma}\label{lem2}Under the conditions of Theorem (\ref{main2}), for $m$ sufficiently large, the following  independent of $\epsilon$ and $m$ estimates hold:
			\begin{equation*}\begin{cases}\label{ests2} u_m^\epsilon \text{ is bounded in }L^{\infty}(0,T; H^1(\Om)),\\
					u_{mt}^\epsilon  \text{ is bounded in }L^\infty(0,T; L^2(\Om))\cap L^2(0,T; H^1(\Om)).
				\end{cases}\end{equation*}
\end{lemma}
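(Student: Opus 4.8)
The plan is to deduce Lemma \ref{lem2} from a single new energy estimate for $w:=u_{mt}^\epsilon$, after which the $L^\infty(0,T;H^1(\Om))$ bound on $u_m^\epsilon$ itself falls out of the identity \eqref{pree2} already established in Lemma \ref{lem1}, with no new multiplier needed. First I would differentiate the regularized equation (\ref{1para}) in $t$. Dropping the indices $m,\epsilon$ and writing $w=u_t$, and using $\partial_t(uu_x)=(uw)_x$, the differentiated equation reads $w_t+w_x+(uw)_x+\Delta w_x+\epsilon\partial^4 w=0$. Differentiating the boundary conditions (\ref{2para}) in $t$ yields $w(0,y,z,t)=w(L,y,z,t)=w_x(L,y,z,t)=\epsilon w_{xx}(0,y,z,t)=0$, which is exactly the structure needed so that the integrations by parts below reproduce the boundary pattern of Estimate II.

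Second, I would form the scalar product with $2(1+x)w$ and integrate over $\Om$. The linear and $\epsilon$-terms reproduce verbatim the left-hand side of \eqref{pree2} with $u$ replaced by $w$, so the identity becomes
\[
\frac{d}{dt}((1+x),w^2)(t)+(1-\epsilon)\int_\s w_x^2(0,y,z,t)\,dy\,dz-\|w\|^2(t)+3\|w_x\|^2(t)+\|w_y\|^2(t)+\|w_z\|^2(t)+2\epsilon((1+x),w_{xx}^2+w_{yy}^2+w_{zz}^2)(t)=-N(t),
\]
where the only genuinely new ingredient is the nonlinear term $N=2((1+x)w,(uw)_x)$, which after integration by parts equals $((1+x)u_x-u,w^2)$. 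These contributions are the whole difficulty. I would bound them by Lemma \ref{lemma1}, via $|(u,w^2)|\le\|u\|_{L^3}\|w\|_{L^3}^2\le C\|u\|_{L^3}\|\nabla w\|\,\|w\|$ and $|((1+x)u_x,w^2)|\le(1+L)\|u_x\|\,\|w\|_{L^4}^2\le C(1+L)\|u_x\|\,\|\nabla w\|^{3/2}\|w\|^{1/2}$, then absorb the factors $\|\nabla w\|$ and $\|\nabla w\|^{3/2}$ into the dissipative block $3\|w_x\|^2+\|w_y\|^2+\|w_z\|^2$ by Young's inequality. Here the exponential smallness of $\|u\|$ from \eqref{e2} and the gradient bound $\int_0^t\|\nabla u\|^2\,d\tau\le 2(1+L)\|u_0\|^2$ from Lemma \ref{lem1} keep the resulting coefficients small.

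Third, after applying Lemma \ref{steklov} to replace $\|w_x\|^2$ by $\tfrac{\pi^2}{L^2}\|w\|^2$, the smallness hypotheses \eqref{condition} should leave a differential inequality of the form $f'(t)+(\alpha-kf^n(t))f(t)\le0$ with $f(t)=((1+x),w^2)(t)$ and $\alpha>0$. Since $f(0)=J_{0m}^\epsilon\le 4J_0$ and $J_0^2\le\pi^2/(200K_2L^2)$, the threshold condition \eqref{lemao2} holds, so Lemma \ref{lemma3} gives $f(t)\le f(0)$ for all $t>0$; this yields $u_t\in L^\infty(0,T;L^2(\Om))$, and integrating the retained dissipative terms in $t$ gives $u_t\in L^2(0,T;H^1(\Om))$.

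Finally, for the $H^1$ bound on $u$ I would simply rearrange \eqref{pree2}, discard the nonpositive terms $-(1-\epsilon)\int_\s u_x^2(0,y,z,t)\,dy\,dz$ and $-2\epsilon\mathcal P_1$, and use \eqref{2pree2} to absorb $\tfrac23(1,u^3)\le\tfrac12\|\nabla u\|^2+\tfrac{2^{11}}{3}\|u\|^6$ into the left side, obtaining $\|u_x\|^2+\|u_y\|^2+\|u_z\|^2\le C\big(\|u\|^2+\|u\|^6+|((1+x)u,u_t)|\big)$ with $|((1+x)u,u_t)|\le\tfrac12((1+x),u^2)+\tfrac12((1+x),u_t^2)$. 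The right-hand side is bounded in $L^\infty(0,T)$ because $((1+x),u^2)$ is controlled by \eqref{e2} and $((1+x),u_t^2)$ by the estimate just proved, so $u\in L^\infty(0,T;H^1(\Om))$. The main obstacle I anticipate is the second step: forcing the nonlinear estimates into precisely the polynomial shape $(\alpha-kf^n)f$ with the stated constant $K_2$, so that \eqref{lemao2} is genuinely implied by \eqref{condition} — this requires tracking the Young and Gagliardo–Nirenberg constants exactly rather than bounding them qualitatively.
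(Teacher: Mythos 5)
Your architecture is in fact the paper's: differentiate \eqref{1para} in $t$, multiply by $2(1+x)u_{mt}^\epsilon$ (the paper's Estimate IV, the inner product \eqref{ipe4}), reduce the nonlinear term by parts to $((1+x)u_x\pm u,u_t^2)$ (the sign discrepancy with the paper's \eqref{ipe41} is immaterial), estimate via Lemma \ref{lemma1}, Young and Lemma \ref{steklov}, close with Lemma \ref{lemma3}, and only afterwards read off the $H^1$-bound on $u_m^\epsilon$ from the identity generated by the multiplier $(1+x)u$. But there is a genuine gap at the decisive point, your second step. After Young's inequality the nonlinear term leaves the coefficient $\|u_x\|^4(t)$ (and, from the $\|u\|_{L^3}$ piece, $\|\nabla u\|(t)\|u\|(t)$) multiplying $\|u_t\|^2(t)$ \emph{pointwise in time}. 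You claim these factors are kept small by \eqref{e2} and by the bound $\int_0^t\|\nabla u_m^\epsilon\|^2(\tau)\,d\tau\le 2(1+L)\|u_0\|^2$ from Lemma \ref{lem1}; that is false: an $L^2$-in-time bound on $\|\nabla u\|$ gives no control of $\|u_x\|^4(t)$ at a fixed $t$ --- a pointwise-in-time bound on $\|u_x\|(t)$ is precisely part of what Lemma \ref{lem2} asserts, so the argument as written is circular, and your differential inequality never actually acquires the closed form $f'(t)+(\alpha-kf^n(t))f(t)\le0$ that Lemma \ref{lemma3} needs.

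The missing idea is the paper's Estimate III, which you do derive but deploy in the wrong order. \emph{Before} invoking Lemma \ref{lemma3}, one transforms $2(L_\epsilon u_m^\epsilon,(1+x)u_m^\epsilon)(t)=0$ into the stationary identity \eqref{nanana} and deduces \eqref{1e3}, namely $\|u_{mx}^\epsilon\|^2(t)\le\frac25 C_1\|u_m^\epsilon\|^2(t)+\frac25(1+L)\big((1+x),|u_{mt}^\epsilon|^2\big)(t)$. Substituting this into the offending coefficient converts $\|u_x\|^4(t)$ into $\frac{2^2}{25}C_1^2\|u\|^4(t)+\frac{2^2}{25}(1+L)^2f^2(t)$ with $f(t)=((1+x),|u_{mt}^\epsilon|^2)(t)$; since $\|u\|^4(t)\le4(1+L)^2\|u_0\|^4$ by \eqref{e2}, the inequality \eqref{e41} becomes \eqref{1e5}, i.e.\ it closes in $f$ alone with $n=2$ and the explicit constants $K_1,K_2$, after which \eqref{condition} together with $|J_{0m}^\epsilon|^2\le5J_0^2$ verifies the threshold \eqref{lemao2} and Lemma \ref{lemma3} applies. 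So the obstacle you flagged at the end is misdiagnosed: it is not a matter of tracking Gagliardo--Nirenberg and Young constants precisely, but the structural step of eliminating $\|u_x\|(t)$ through the stationary identity so the inequality becomes autonomous in $f$. Your final step (the $L^\infty(0,T;H^1(\Om))$ bound on $u$ from the rearranged \eqref{pree2}, with $((1+x),u_t^2)$ now bounded) is sound once this is repaired and matches the paper's passage \eqref{e3}--\eqref{e4p}.
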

\proof
		{\bf Estimate III}.	Dropping the indices $\epsilon$, $m$, transform the inner product 
		
		\begin{equation}\label{4pi}2(L_\epsilon u_m^\epsilon, (1+x) u_m^\epsilon)(t) = 0\end{equation}
		
		\noi into the equality	
		\begin{align}&\int_\s u_x^2(0,y,z,t)\, dy \, dz+ 3\|u_x\|^2(t) + \|u_y\|^2(t) + \|u_z\|^2(t)\nonumber\\
			&+ 2\epsilon \mathcal{P}_1 = \frac{2}{3} (1,u^3)(t) + \| u \|^2(t) - 2\big((1+x)u, u_t \big)(t).\label{nanana}
		\end{align}
		
		\noi By Holder and Young inequalities,
		\begin{align*}& 2\int_\Omega (1+x)uu_t \, d\Omega \leq 2 \|u\|(t) \big(\int_\Omega [(1+x) u_t]^2\, d\Omega \big)^{1/2}\\ 
			&\leq \|u\|^2(t) + \int_\Omega (1+x)^2u_t^2 \, d\Omega\\
			&\leq \|u\|^2(t) + (1+L)\big( (1+x), u_t^2\big)(t).
		\end{align*} 
		
		\noi Making use of \eqref{lady1}, \eqref{e2} and the last inequality, we reduce \eqref{nanana} to the form 
		\begin{align}\|\nabla u \|^2(t) &\leq 2\big[ 2 + \frac{2^{11}}{3} \|u \|^4 (t) \big]\|u\|^2(t) + 2((1+L)(1+x),(u_t)^2)(t)\nonumber\\
		&\label{e3}\leq 4(1+L)C_1 e^{-\chi t} + 2(1+L) (( 1+x), |u_{mt}^\epsilon|^2)(t),\end{align}
		
		\noi where $C_1 = 2+ \frac{2^{13}}{3} \|u_0 \|^4$ is independent of $t >0$.
		
Returning to (\ref{nanana}), we get
			\begin{align}&\| u_{mx}^\epsilon \|^2(t) \leq \frac{2}{5}(2 + \frac{2^{13}}{3} \|u_0\|^4) \|u_m^\epsilon\|^2(t)\nonumber \\
				& + \frac{2}{5} (1+L)\big((1+x), |u_{mt}^\epsilon|^2 \big)(t).\label{1e3} \end{align}
		
		{\bf Estimate IV}.		
		Consider the inner product
		\begin{equation}\label{ipe4}2((L_{\epsilon}u_m^\epsilon)_t, (1+x)u_{mt}^\epsilon)(t) = 0.\end{equation}
		
		\noi We calculate
		
		\begin{align}\label{ipe41}2 \big((1+x), u_t (uu_x)_t \big) &= \big( (1+x)u_x + u, u_t^2\big)\\
			&\leq \|(1+x)u_x + u\|\|u_t \|_{L^4(\Omega)}^2.\end{align}
		
		\noi Exploiting Lemma \ref{lemma1} and the Young inequality, we obtain
		
		\begin{align}&2((1+x),(uu_x)_t)(t) \leq \frac{1}{8} \| \nabla u_t \|^2(t) + (1+L)^4 3^3 2^{16} \big[ \| u_x \| ^4(t)\nonumber\\
		&+ \| u\|^4(t) \big]\|u_t\|^2(t)].\label{ipe42}
		\end{align}
		
		\noi Substituting (\ref{ipe41}) and (\ref{ipe42}) into \eqref{ipe4}, we get
		
		\begin{align}&\frac{d}{dt}((1+x),(u_t)^2)(t) + \big[\frac{23}{8}\| u_{xt}\|^2(t) + \frac{7}{8} \|u_{zt}\|^2(t)+ \frac{7}{8} \|u_{yt}\|^2(t) \big]\nonumber\\
		&\label{e41}-(1 + (1+L)^4 3^3 2^{16} \big[ \| u_x \| ^4(t) + \| u\|^4 \big]\|u_t\|^2(t) \leq 0.
		\end{align}
		
		Since by \eqref{1e3},
		
		$$\| u_x\|^4(t) \leq \frac{2^2}{25}C_1^2 \|u\|^4(t) + \frac{2^2}{25} (1+L)^2 ((1+x),u_t^2)^2(t)$$
		
		\noi and by Lemma \ref{steklov},
		$$\| u_{xt} \|^2(t) \geq \frac{\pi^2}{L^2} \|u_t\|^2(t),$$
		
		\noi then (\ref{e41}) reads
		
		\begin{align*}&\frac{d}{dt}((1+x),|u_t|^2)(t) + [\frac{\pi^2}{L^2} - 1 - 2^{16}3^3(1+L)^4(\frac{2^3}{25}C_1^2 + 1)\|u_0\|^4 \\ &- {2^{19}3^3}(1+L)^6(1+x,|u_t|^2)^2(t)  ]\|u_t\|^2(t)  \leq 0.\end{align*}
		According to Theorem \ref{main2} notations,	
		\begin{align}&\frac{d}{dt}((1+x),|u_{mt}^\epsilon|^2)(t) + \big[\frac{\pi^2}{L^2} - 1 - K_1 \|u_0\|^4  \nonumber \\& - K_2 ((1+x),|u_{mt}^\epsilon|^2)^2(t)\big] \|u_{mt}^\epsilon \|^2(t) \leq 0.\label{1e5} \end{align}

		\noi For $\epsilon$ small and $m$ sufficiently large fixed $2\epsilon^2 ((1+x), | \partial^4 u_{0m}^\epsilon|^2) \leq J_0$ and
		
		\begin{equation}\label{contaJ0} |J_{0m}^\epsilon|^2 \leq 5J_0^2
			\end{equation}

	\noi	By Lemma \ref{lemma3}, $((1+x), |u_{mt}^\epsilon|^2)(t) \leq J_{0m}^\epsilon$ for all $t > 0$ and making use of (\ref{1e5}), (\ref{contaJ0}), we obtain
	
\begin{align*}\frac{d}{dt} ((1+x),|u_{mt}^\epsilon|^2)(t) + \big[ \frac{\pi^2}{L^2} - 1 - K_1 \|u_0\|^4 - 25K_2 J_0^2 \big] \|u_t^\epsilon\|^2(t).
\end{align*}

\noi By \eqref{condition} and \eqref{contaJ0}, we get $\frac{\pi^2}{2L^2} - 1 - K_1 \|u_0\|^4 - 25K_2 J_0^2 \geq 0$.\\

Hence
	
$$\frac{d}{dt}((1+x), |u_{mt}^\epsilon|^2)(t) + \frac{\pi^2}{2L^2} ((1+x), | u_{mt}^\epsilon|^2)(t)\leq 0$$

\noi and

		\begin{equation}\label{decaiut} \|u_{mt}^\epsilon \|^2(t) \leq 2(1+L) J_{0} e^{-\frac \chi 2 t}.
		\end{equation}
		
	\noi	Returning to (\ref{e41}), we obtain
		
		\begin{align}\label{decaiutdps}\int_0^t \| \nabla u_{mt}^\epsilon\|^2(\tau) \, d\tau < C(J_{0},L)\end{align}
		
		\noi and from (\ref{e2}), (\ref{e3}),

		\begin{equation}\label{e4p} \| u_m^\epsilon \|_{H^1(\Omega)}^2(t) \leq C(L, J_{0}) e^{-\frac \chi 2 t}, \,\, \forall t>0.\end{equation}

	\noi for $\epsilon > 0$ sufficiently small and $m$ sufficiently large fixed.
	
		\begin{lemma}\label{prop1}Under the assumptions of Theorem \ref{main2}, we  find
		\begin{equation*}\begin{cases} u_{mxy}^\epsilon,u_{mxz}^\epsilon,u_{myy}^\epsilon, u_{mzz}^\epsilon, u_{myz}^\epsilon \text{ are bounded in }L^{\infty}(0,T; L^2(\Om)),\\
		
		u_m^\epsilon \text{ is bounded in }L^{2}(0,T; H^2(\Om)),\\
		
		\nabla u_{myy}, \nabla u_{mzz}, \nabla u_{mxyz} \text{ are bounded in } L^2(0,T; L^2(\Om)),\\
		
		u_{mx}^\epsilon(0,y,z,t) \text{ is bounded in }L^{\infty}(0,T; H^1(\s))\cap L^2(0,T; H^2(\s)).
			\end{cases}\end{equation*}
		\end{lemma}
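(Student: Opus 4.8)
The plan is to carry out the higher-order energy analysis (an ``Estimate V'') by differentiating the regularized equation \eqref{1para} in the tangential variables $y$ and $z$. Since $\Om$ carries no boundary in the $y$- and $z$-directions, $\partial_y$ and $\partial_z$ commute with \eqref{1para} and preserve the boundary conditions \eqref{2para}, so each differentiated equation is again of Zakharov--Kuznetsov type and may be tested against weighted multipliers built from the factor $(1+x)$, exactly in the spirit of Estimates II--IV. The structural fact that makes this work is that $u(L,y,z,t)=u_x(L,y,z,t)=0$ forces every pure tangential derivative, and every $x$-derivative of a tangential derivative, to vanish at $x=L$ (e.g. $u_{xyy}(L,\cdot)=\partial_{yy}u_x(L,\cdot)=0$); consequently the boundary contributions produced by integrating the dispersive terms by parts survive only at the characteristic boundary $\{x=0\}$, where they appear with a favourable sign and will be collected to yield the $\s$-trace bounds for $u_{mx}^\eps(0,y,z,t)$.

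Concretely, I would first obtain $u_{yy},u_{zz},u_{yz}\in L^\infty(0,T;L^2(\Om))$ by testing $\partial_{yy}$, $\partial_{zz}$ and $\partial_{yz}$ of \eqref{1para} against $(1+x)u_{yy}$, $(1+x)u_{zz}$ and $(1+x)u_{yz}$, respectively. The time term gives $\tfrac{d}{dt}((1+x),u_{yy}^2)$, and so on; the transport term $u_x$ contributes a lower-order $\|u_{yy}\|^2$ handled by Lemma \ref{steklov}; and the dispersive terms, after integration by parts in $x$ against the weight, produce coercive third-order quantities whose integrands are $\nabla u_{yy}$, $\nabla u_{zz}$, $\nabla u_{yz}$, giving exactly the claimed $L^2(0,T;L^2(\Om))$ bounds, together with positive traces $u_{xyy}^2(0)$, etc., which assemble into $u_{mx}^\eps(0,\cdot)\in L^2(0,T;H^2(\s))$. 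The terms carrying $\eps\partial^4$ are sign-definite after integration by parts --- the boundary condition $\eps u_{mxx}^\eps(0,y,z,t)=0$ in \eqref{2para} is exactly what is needed to discard their boundary contributions --- so all resulting bounds are uniform in $\eps$.

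The main obstacle is the nonlinear term. Differentiating $uu_x$ twice produces products such as $u_{yy}u_x$, $u_yu_{xy}$ and $uu_{xyy}$ (and their $z$- and mixed analogues), which must be paired with the weighted second-order multipliers and controlled by the third-order dissipation. I would bound these using the anisotropic inequality \eqref{lady1} of Lemma \ref{lemma1} together with Hölder's inequality, interpolating so that the top-order factor is absorbed into a small multiple of $\|\nabla u_{yy}\|^2+\|\nabla u_{zz}\|^2+\|\nabla u_{yz}\|^2$ while the remaining factors are estimated by the already-established exponentially decaying bounds \eqref{e2}, \eqref{e4p}, \eqref{decaiut}. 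Because the Sobolev thresholds are tight in three dimensions, closing this step is possible only under the smallness hypotheses \eqref{condition}, which keep the coefficient of the top-order term below the coercivity constant furnished by Lemma \ref{steklov}; this is the same coercivity-versus-nonlinearity balance used in Estimates II and IV, now at the second-derivative level. The resulting differential inequality, of the form treated in Lemma \ref{lemma3}, then yields the $L^\infty$ bounds with decay rate $\tfrac{\chi}{2}$.

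It remains to recover $u_{xy},u_{xz}$ and $u_{xx}$. For $u_{xy},u_{xz}\in L^\infty(0,T;L^2(\Om))$ I would run one further weighted estimate on the $y$- and $z$-differentiated equations, choosing the multiplier so that the time term reproduces $\tfrac{d}{dt}((1+x),u_{xy}^2)$ (resp. $u_{xz}^2$); the boundary terms at $x=L$ again vanish because $u_{xy}(L,\cdot)=u_{xz}(L,\cdot)=0$, and those at $x=0$ vanish because $u(0,\cdot)=0$ forces $u_y(0,\cdot)=u_{ty}(0,\cdot)=0$. Finally $u_{xx}$ follows algebraically from \eqref{1para}: writing $u_{xxx}=-(u_t+u_x+uu_x)-u_{xyy}-u_{xzz}$, the right-hand side lies in $L^2(0,T;L^2(\Om))$ (using $u_t\in L^2(0,T;H^1)$, \eqref{e4p}, the $u_{xyy},u_{xzz}$ just obtained, and Lemma \ref{lemma1} for $uu_x$), and integrating in $x$ while killing the boundary terms via $u_x(L,\cdot)=0$ and $\eps u_{xx}(0,\cdot)=0$ gives $u_{xx}\in L^2(0,T;L^2(\Om))$, hence $u_m^\eps\in L^2(0,T;H^2(\Om))$. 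All bounds being $\eps$- and $m$-independent, the limits $\eps\to0$ and $m\to\infty$ may then be taken as announced.
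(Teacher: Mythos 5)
Your first step essentially reproduces the paper's Estimate VI: testing the $\partial_{yy}$-, $\partial_{zz}$- and $\partial_{yz}$-differentiated equation against $(1+x)u_{yy}$, etc., is, after integrating by parts in the tangential variables, the same as the paper's inner product $2(L_\epsilon u, (1+x)[\partial_y^4u+\partial_y^2\partial_z^2u+\partial_z^4u])=0$, and it does yield $u_{yy},u_{zz},u_{yz}\in L^\infty(0,T;L^2(\Om))$, the $L^2$-in-time third-order tangential bounds, and the $L^2(0,T;H^2(\s))$ part of the trace. The genuine gap is your route to $u_{xy},u_{xz}\in L^\infty(0,T;L^2(\Om))$ and to the $L^\infty(0,T;H^1(\s))$ trace. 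Any multiplier that makes the time term reproduce $\frac{d}{dt}((1+x),u_{xy}^2)$ forces you to pair $\Delta u_{xy}$ (and $\epsilon\,\partial^4u_y$) against $(1+x)u_{xy}$-type functions, and the resulting integrations by parts in $x$ produce traces of $u_{xxy}$ at $x=0$ and at $x=L$ with uncontrolled sign (only $u$ and $u_x$ vanish at $x=L$, not $u_{xx}$). Moreover your stated justification is incorrect: $u(0,y,z,t)=0$ gives $u_y(0,\cdot)=u_{ty}(0,\cdot)=0$ but says nothing about $u_{xy}(0,\cdot)$ --- which is precisely the unknown trace the lemma asserts control of, so it cannot be ``killed'' as a vanishing boundary term. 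The paper sidesteps all of this with its Estimate V, a \emph{stationary} identity with no $\frac{d}{dt}$ structure: pair the undifferentiated equation with $-(1+x)(u_{yy}+u_{zz})$. The dispersive terms then produce the coercive combination $3\|u_{xy}\|^2+3\|u_{xz}\|^2+\|u_{yy}\|^2+\|u_{zz}\|^2+2\|u_{yz}\|^2$ together with the traces $u_{xy}^2(0,\cdot)$, $u_{xz}^2(0,\cdot)$ with favourable sign, while the term $2((1+x)u_t,u_{yy}+u_{zz})$ is moved to the right-hand side and bounded \emph{pointwise in time} by the exponential decay of $((1+x),u_t^2)(t)$ already established in Estimate IV (inequality \eqref{decaiut}). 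This conversion of the $u_t$-decay into second-order, elliptic-type control at each fixed $t$ is the idea missing from your plan, and without it neither the $L^\infty$ bounds on $u_{mxy}^\epsilon,u_{mxz}^\epsilon$ nor the $L^\infty(0,T;H^1(\s))$ trace bound is reached.

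The second gap is in the recovery of $u_{xx}$: you drop the regularizing term. At the level of \eqref{1para} the correct identity is $u_{xxx}+uu_x+\epsilon u_{xxxx}=g$, not $u_{xxx}=-(u_t+u_x+uu_x)-u_{xyy}-u_{xzz}$, and the lemma's bound $u_m^\epsilon\in L^2(0,T;H^2(\Om))$ must be uniform in $\epsilon$ \emph{before} any limit is taken, since it feeds the compactness argument; using the $\epsilon$-free equation here is circular. Knowing $g\in L^2$ does not allow you simply to ``integrate in $x$'': setting $w=u_{xx}+\epsilon u_{xxx}$ gives $w(x)=w(0)+\int_0^x g\,d\xi$ with $w(0)=\epsilon u_{xxx}(0)$ unknown, and the condition $u_{xx}(0)=0$ alone does not decouple $u_{xx}$ from $\epsilon u_{xxx}$. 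The paper's Estimate VII is exactly the device that removes this obstruction: multiply the equation by $x$ and integrate over $(0,1)$, integrate the equation over $(x,1)$, subtract the two identities, and test the resulting relation $u_{xx}+\epsilon u_{xxx}=\epsilon u_{xx}(1)+h$ against $u_{xx}$; the $\epsilon$-terms then combine into sign-definite boundary quantities, giving $\int_0^1u_{xx}^2\,dx\le 4\epsilon u_x^2(0)+\frac12\int_0^1h^2\,dx$ uniformly in $\epsilon$. Without an argument of this kind your final paragraph does not close.
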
	
			
		\proof {\bf Estimate V}. Dropping the indices $\epsilon$, $m$, transform the scalar product 
		
		$$-2((1+x)Au_m^\epsilon,u_{myy}^\epsilon+u_{mzz}^\epsilon)(t)=0$$
		into the following equality:
		\begin{align}
			&\|u_y\|^2(t)+\|u_z\|^2(t)+(1-2\epsilon)\int_{\mathcal{S}}\big[u^2_{xy}(0,y,z,t)+u^2_{xz}(0,y,z,t)\big]\,dydz\notag\\
			&+\|u_{yy}\|^2(t)+\|u_{zz}\|^2(t)+2\|u_{yz}\|^2(t)+3\|u_{xy}\|^2(t)+3\|u_{xz}\|^2(t)\notag\\&
			+((1+x)u_x-u,u^2_y)(t)+((1+x)u_x-u,u^2_z)(t)\notag\\& + 2\epsilon \mathcal{P}_2(t)
			=2((1+x)u_t,u_{yy}+u_{zz})(t), \label{e5}
		\end{align}
		
		\noi where $\mathcal{P}_2 (t) = \big((1+x), |u_{xxy}^2 + u_{xxz}^2 + u_{yyy} + u_{yyz}^2 + u_{zzz}^2 + u_{zzy}^2|\big)(t)$.

		We estimate
		\begin{align*}
			&I_1=((1+x)u_x-u,u^2_y)(t)\leq \|(1+x)u_x-u\|(t)\|u_y\|^2_{L^4(\Om)}(t)\\&
			\leq (1+L)\big[\|u_x\|(t)+\|u\|(t)\big]4^{3/2}C^2_\Omega \|\nabla u\|^{1/2}(t)\|\nabla 
			u_y\|^{3/2}(t)\\&
			\leq \frac{1}{8}\|\nabla u_y\|^2(t)+(1+L)^4C^8_\Omega 2^{16¨}3^3\big[\|u_x\|^4(t)+\|u\|^4(t)\big]\|\nabla u\|^2(t).
		\end{align*}            
		Similarly,
		\begin{align*}
			& I_2=((1+x)u_x-u,u^2_z)(t)\leq  \frac18\|\nabla u_z\|^2(t)\\
			&+(1+L)^4C^8_\Omega 2^{16¨}3^3\big[\|u_x\|^4(t)+\|u\|^4(t)\big]\|\nabla u\|^2(t).
		\end{align*}
		Substituting $I_1,I_2$ into (\ref{e5}), we find		
		\begin{align*}&\int_{\mathcal{S}}\big[u^2_{xy}(0,y,z,t)+u^2_{xz}(0,y,z,t)\big]\,dy\, dz
			+\|u_{yy}\|^2(t)+\|u_{zz}\|^2(t)\\&+\|u_{yz}\|^2(t)+\|u_{xy}\|^2(t)+\|u_{xz}\|^2(t)\\&\leq C_4(L)\big[\|\nabla u\|^6(t)+\|u\|^4(t)\|\nabla u\|^2(t)+((1+x),u_t^2)(t)].
		\end{align*}
	
\noi Making use of (\ref{e4p}),
\begin{align}&\|u_{mxy}^\epsilon \|^2(t) + \|u_{mxz}^\epsilon \|^2(t) + \|u_{myy}^\epsilon \|^2(t) + \|u_{myz}^\epsilon \|^2(t) + \|u_{mzz}^\epsilon \|^2(t)\nonumber\\
&+\int_\s \{ (u_{mxy}^\epsilon)^2(0,y,z,t) + (u_{mxz}^\epsilon)^2(0,y,z,t)\} \, dy \, dz\nonumber\\
& \leq C(J_{0},L) e^{-\frac \chi 2 t}.\label{5e5}\end{align} 

To prove that $u$ is bounded in $L^\infty(0,T;H^2(\Omega))$, it is sufficiently to estimate $\| u_{mxx}^\epsilon \|(t)$.

	{\bf Estimate VI}. From the inner product
		\begin{equation}\label{e6inner} 2(L_\epsilon u_m^\epsilon, (1+x)\big[u_{myyyy}^\epsilon + u_{myyzz}^\epsilon + u_{mzzzz}^\epsilon\big])(t) = 0,\end{equation}
		
\noi dropping the indices $\epsilon$ and $m$, we find
		
	\begin{align*}&J_1 = 2\int_\Omega u_{t} (1+x)\big[ \partial_y^4 u + \partial_y^2 \partial_z^2 u + \partial_z^4 u \big]\, d\Omega\\ & = \frac{d}{dt} \big( (1+x), u_{yy}^2 + u_{yz}^2 + u_{zz}^2 \big)(t),\\
	&J_2=  2\int_\Omega u_{x} (1+x) \big[ \partial_y^4 u + \partial_y^2 \partial_z^2 u + \partial_z^4 u\big]\, d\Omega\\ & = \int_\Omega \big(u_{yy}^2 + u_{yz}^2 + u_{zz}^2 \big) \, d\Omega,\\
	&J_3 = 2\int_\Omega u_{xxx} (1+x) \big[ \partial_y^4 u + \partial_y^2 \partial_z^2 u + \partial_z^4 u\big] \, d\Omega = 3\big[\|u_{yyx} \|^2(t)\\
	& + \|u_{xyz} \|^2(t) + \|u_{zzx} \|^2(t)\big] + \int_\s \big\{u_{yyx}^2(0,y,z,t)+ u_{xyz}^2(0,y,z,t)\\ 
	&+ u_{zzx}^2(0,y,z,t)\big\}\, dy \, dz,\\
	& J_4 = 2\int_\Omega u_{yyx}(1+x)\big[\partial_y^4 u + \partial_y^2 \partial_z^2 u + \partial_z^4 u \big]\, d\Omega \\ 
	&= \|u_{yyy}\|^2(t) + \| u_{zzy} \|^2(t) + \|u_{yyz}\|^2(t),\\
	& J_5 = 2\int_\Omega u_{zzx}(1+x)\big[\partial_y^4 u + \partial_y^2 \partial_z^2 + \partial_z^4 u \big]\, d\Omega\\ 
	& = \|u_{zzz}\|^2(t) + \| u_{yyz} \|^2(t) + \|u_{zzy}\|^2(t),\\
	&J_6 = 2\epsilon \int_\Omega  \partial_y^4 u(1+x)\big[\partial_y^4 u + \partial_y^2 \partial_z^2 u + \partial_z^4 u \big]\, d\Omega\\ 
	& = \epsilon \big((1+x), u_{yyyy}^2 + u_{yyzz}^2 + u_{yyyz}^2\big)(t),\\
	& J_7 = 2\epsilon \int_\Omega \partial_z^4 u(1+x)\big[\partial_y^4 u + \partial_y^2 \partial_z^2 u + \partial_z^4 u  \big]\, d\Omega\\
	& = \epsilon \big((1+x), u_{zzzz}^2 + u_{yyzz}^2 + u_{zzzy}^2\big)(t),\\
	&J_8 = 2\epsilon \int_\Omega \partial_x^4 u (1+x)\big[\partial_y^4 u + \partial_y^2 \partial_z^2 u + \partial_z^4 u  \big]\, d\Omega = \epsilon ((1+x), u_{yyxx}^2 \\ 
	& + u_{xxyz}^2 + u_{zzxx}^2)(t) - 2\epsilon \int_\s \big\{u_{yyx}^2(0,y,z,t) + u_{xyz}^2(0,y,z,t) \\ & + u_{zzx}^2(0,y,z,t)\big\}\, dy \, dz.\end{align*}	

\noi Substituting $J_1$-$J_8$ into (\ref{e6inner}), we get
		
		\begin{align}
			&\frac{d}{dt}((1+x),u^2_{yy}+u^2_{zz}+u^2_{yz})(t) -\big[\|u_{yy}\|^2(t)+\|u_{zz}\|^2(t)+\|u_{yz}\|^2(t)\big]\notag\\&
			+(1-2\epsilon)\int_{\mathcal{S}}\big\{u^2_{xyy}(0,y,z,t)+u^2_{xzz}(0,y,z,t)+u^2_{xyz}(0,y,z,t)
			\big\}\,dydz\notag\\&
			+\|u_{yyy}\|^2(t)+\|u_{zzz}\|^2(t)+2\|u_{yzz}\|^2(t)+2\|u_{zyy}\|^2(t)			+3\|u_{xyy}\|^2(t)\notag\\ 
			& + 3\|u_{xzz}\|^2(t)+3\|u_{xyz}\|^2(t) + \epsilon \mathcal{P}_3 (t)
			+2((1+x)uu_x,\partial^4_y u)(t)\notag\\& +2((1+x)uu_x,\partial^4_z u)(t)+2((1+x)uu_x,\partial^2_y\partial^2_z u)(t)=0,\label{e7}
		\end{align}
		
		\noi where $\mathcal{P}_3 = \big((1+x), \big[u_{yyyy}^2 + 2u_{yyzz}^2 + u_{yyyz}^2 + u_{zzzz}^2 + u_{zzzy}^2 + u_{yyxx}^2 + u_{zzxx}^2\big]\big)(t)$. 
		
		We estimate the nonlinear term in the following manner:		
		\begin{align}& 2\int_\Omega uu_x (1+x) u_{yyyy} \, d\Omega = -2 \int_\Omega u_y u_x (1+x) u_{yyy} \, d\Omega \notag \\ & - 2\int_\Omega u u_{xy} (1+x) u_{yyy} \, d\Omega \notag \\
			&= 2\int_\Omega u_{yy}^2 u_x (1+x) \, d\Omega + 4 \int_\Omega u_y u_{xy} (1+x) u_{yy} \, d\Omega \notag\\ &+ 2\int_\Omega u u_{yyx} (1+x) u_{yy} \, d\Omega \notag\\
			&= 2\int_\Omega u_{yy}^2 u_x(1+x) \, d\Omega + 2\int_\Omega (u_y^2)_x (1+x) u_{yy} \, d\Omega \notag\\ &+  \int_\Omega u_{yy}^2 u_x (1+x) \, d\Omega \notag\\
			&=  \int_\Omega u_{yy}^2 u_x (1+x) \, d\Omega - 2\int_\Omega u_y^2 u_{yy} \, d\Omega - 2\int_\Omega u_y^2 (1+x) u_{yyx} \, d\Omega \notag\\ &- 2\frac{1}{2} \int_\Omega u u_{yy}^2 \, d\Omega \notag\\
			&=  \big( (1+x)u_x - u, u_{yy}^2  \big) -2\big( u_y^2, u_{yy} \big) - 2\big( (1+x) u_y^2 , u_{yyx} \big) \notag\\
			&= I_1 - I_2 - I_3.\label{Is1}
		\end{align}
		
		\noi Making use of (\ref{Is1}), we find
		
		\begin{align*} & I_1 = \big( (1+x)u_x - u, u_{yy}^2 \big)(t)\\
			&\leq  \|(1+x) u_x - u\|(t) \|u_{yy} \|_{L^4(\Omega)}^2(t)\\
			&\leq (1+L)[\|u_x\|(t) + \| u \|(t)]  4^{3/2} \|u_{yy} \|^{1/2}(t) \| \nabla u_{yy} \|^{3/2}(t)\\
			&\leq \frac{3}{4} \delta_1^{4/3} \| \nabla u_{yy} \|^2(t) + \frac{2^{16}}{\delta_1^4}\|u_{yy}\|^2(t) \big[\|u_x\|(t) + \| u \|\big]^4(t)\\
			&=\frac{3}{4} \delta_1^{4/3} \|\nabla u_{yy} \|^2(t) + \frac{2^{16}}{\delta_1^4} \|u_{yy}\|^2(t) \big[ \|u_x\|^4(t) + \|u\|^4(t) \big]\\
			&\leq \frac{3}{4}\delta_1^{4/3} \|\nabla u_{yy}\|^2(t) + C e^{-\frac \chi 2 t},\\		
	        & I_2 = -2(u_y^2, u_{yy})\\
	        &\leq 2\|u_y\|^4(t) + 2\|u_{yy}\|^2(t) \leq 2\| u_y \|_{L^4(\Omega)}^4(t) + 2\|u_{yy}\|^2(t)\\
			&\leq 4^4 \|u_y\| \|\nabla u \|^3(t) + 2\|u_{yy}\|^2(t)\\
			&\leq  2^8(\frac{1}{2} \|u_y\|^2(t) + \frac{1}{2} \| \nabla u \|^6(t)) + 2\|u_{yy}\|^2(t)\\			&\leq C e^{-\frac \chi 2 t},\\
			& I_3 = -2\big((1+x)u_y^2, u_{yyx} \big)(t) \\
			&\leq 2\|u_y\|_{L^4(\Omega)}^2(t) \| u_{yyx} \|(t)\\
			&\leq (1+L) 4^{3/2} \|u_y \|^{1/2}(t) \| \nabla u_y \|^{3/2}(t) \| u_{yyx} \|(t) \\
			&\leq \delta_2^2 \|u_{yyx} \|^2(t) + \frac{2^8}{\delta_2^2} \|u_y\|^2(t) \|\nabla u_y \|^3(t)\\
			&\leq \delta_2^2 \|u_{yyx} \|^2(t) + \frac{2^7}{\delta_2^2}\big( \|u_y\|^2(t) + \|\nabla u_y \|^6(t) \big)\\
			&\leq \delta_2^2 \|u_{yyx}\|^2(t) + C e^{-\frac \chi 2 t},
		\end{align*}

\noi where $\delta_1, \delta_2$ are arbitrary positive constants and $C$ is a constant independent of $\epsilon, m$ and $t$.	Substituting $I_1$-$I_3$ into (\ref{Is1}), we get
		
	\begin{align}&\int uu_{x} (1+x) u_{yyyy} \, d\Omega \leq \frac34{\delta_1}^{4/3} \|u_{yy}\|^2(t) + \delta_2^2 \|u_{yyx} \|^2(t)\nonumber\\& + C e^{-\frac \chi 2 t},\label{1e6}\end{align}
		
		\noi Similarly,				
				\begin{align}& \int uu_{x} (1+x) u_{zzzz} \, d\Omega \leq \frac34{\delta_1}^{4/3} \|u_{zz}\|^2(t) + \delta_2^2 \|u_{zzx} \|^2(t)\nonumber \\ & + C e^{-\frac \chi 2 t}\label{2e6}\end{align}	
				\noi and				
				\begin{align} & 2\int_\Omega uu_x(1+x) u_{yyzz}(t) \,d\Omega=-2((1+x)u_z u_x,\partial^2_y\partial_z u)(t)
				\notag\\ & -2((1+x)uu_{zx},\partial^2_y\partial_z u)(t) = 2((1+x)u_x,u^2_{zy})(t) \notag\\ &
				-((1+x)u^2_z,u_{xyy})(t)- (u^2_z,u_{yy})(t) + 2((1+x)uu_{xzz},u_{yy})(t)\\
				&\equiv J_1 + J_2 + J_3 + J_4.\label{Is2}
				\end{align}
				
				Using \eqref{e3}, \eqref{5e5}, we estimate 				
				\begin{align*}
				& J_1 = 2((1+x)u_x,u^2_{yz})(t) \leq 2(1+L)\|u_x\|(t)\|u_{yz}\|^2_{L^4(\Om)}(t)\\&
				\leq 2^4(1+L)\|u_x\|(t)\|u_{yz}\|^{1/2}(t)\|\nabla u_{yz}\|^{3/2}(t)\\&
				\leq \frac34 \delta_3^{4/3}\|\nabla u_{yz}\|^2(t)+\frac{2^{14}}{\delta_3^4}(1+L)^4\|u_x\|^4(t)\|u_{yz}\|^2(t)\\ 
				&\leq\frac34 \delta_3^{4/3}\|\nabla u_{yz}\|^2(t)+ C e^{-\frac \chi 2 t},\\
				& J_2=-((1+x)u^2_z,u_{xyy})(t) \leq (1+L)\|u_{xyy}\|(t)\|u_z\|^2_{L^4(\Om)}(t)\\&
				\leq \delta_4\|u_{xyy}\|^2(t)+\frac{4^3 (1+L)^2}{\delta_4}\|\nabla u\|(t)\|\nabla u_z\|^3(t)\\&
				\leq \delta_4\|u_{xyy}\|^2(t)+C e^{-\frac \chi 2 t},\\
				& J_3 = -( u^2_z,u_{yy})(t) \leq  \|u_{yy}\|(t)\|u_z\|^2_{L^4(\Om)}(t)\\&
				\leq \|u_{yy}\|^2(t)+4^2\|u_z\|(t)\|\nabla u_z\|^3(t)\\ &\leq \|u_{yy}\|(t)\|u_z\|^2_{L^4(\Om)}(t)
				\leq \|u_{yy}\|^2(t)+ C e^{-\frac \chi t 2},\\
				& J_4 = 2((1+x)uu_{xzz},u_{yy})(t)\\& \leq2(1+L)\|u_{xzz}\|(t)\|u\|_{L^4(\Om)}(t)\|u_{yy}\|_{L^4(\Om}(t)
				\leq \delta_5\|u_{xzz}\|^2(t) \\& +\frac{4^3(1+L)^2}{\delta_5}\|\nabla u_{yy}\|^{3/2}(t)\|u\|^{1/2}(t)\|\nabla u\|^{3/2}(t)\|u_{yy}\|^{1/2}(t)\\
				& \leq \delta_5\|u_{xzz}\|^2(t) \\& +\frac{3\delta_6^{4/3}}{4\delta_5}\|\nabla u_{yy}\|^2(t) +\frac{4^{11}(1+L)^8}{\delta_5 \delta_6^4}\|u\|^2(t)\|u_{yy}\|^2(t)\|\nabla  u\|^6(t)\\&
				\leq \delta_5\|u_{xzz}\|^2(t) +\frac{3\delta_6^{4/3}}{4\delta_5}\|\nabla u_{yy}\|^2(t)
				+C e^{-\frac \chi 2 t},
				\end{align*}
				
		\noi where $\delta_3, \delta_4$ are arbitrary positive constants and $C$ is a constant independent of $\epsilon, m$ and $t$. Substituting $J_1$-$J_4$ into (\ref{Is2}) and making use of (\ref{e7}), (\ref{Is1}), we reduce (\ref{e7}) to the form
		
		\begin{align*}
		&\frac{d}{dt}((1+x),u^2_{yy}+u^2_{zz}+u^2_{yz})(t)\\&
		+\int_{\mathcal{S}}\big\{u^2_{xyy}(0,y,z,t)+u^2_{xzz}(0,y,z,t)+u^2_{xyz}(0,y,z,t)
		\big\}\,dydz\\&
		+\|u_{yyy}\|^2(t)+\|u_{zzz}\|^2(t)+\|u_{yzz}\|^2(t)+\|u_{zyy}\|^2(t)\\&
		+\|u_{xyy}\|^2(t)+\|u_{xzz}\|^2(t)+\|u_{xyz}\|^2(t)\\&
		+\epsilon \big((1+x), \big[u_{yyyy}^2 + 2u_{yyzz}^2 + u_{yyyz}^2 + u_{zzzz}^2\\ & + u_{zzzy}^2 + u_{yyxx}^2 + u_{zzxx}^2\big]\big)(t)\notag
		\leq C(J_0,L)e^{-\frac \chi 2 t}.
		\end{align*}
		
		\noi Integrating over $(0,t)$ , we obtain
		
		\begin{align}
		&((1+x),(u^\epsilon_{myy})^2+(u^\epsilon_{mzz})^2+(u^\epsilon_{myz})^2)(t)\notag +\int_0^t\big\{\int_{\mathcal{S}}\big[(u^\epsilon_{mxyy})^2(0,y,z,\tau) \\ &+(u^\epsilon_{mxzz})^2(0,y,z,\tau)+(u^\epsilon_{mxyz})^2(0,y,z,\tau)
		\big]\,dydz\notag\\&
		+\|u_{myyy}^\epsilon\|^2(\tau)+\|u_{mzzz}^\epsilon\|^2(\tau)+\|u_{myzz}^\epsilon\|^2(\tau)+\|u_{mzyy}^\epsilon\|^2(\tau)\notag\\&
		+\|u_{mxyy}^\epsilon \|^2(\tau)+\|u_{mxzz}^\epsilon \|^2(\tau)+\|u_{mxyz}^\epsilon \|^2(\tau)\notag\\&
		+\epsilon \big((1+x), \big[(u_{myyyy}^\epsilon)^2 + 2(u_{myyzz}^\epsilon)^2 + (u_{myyyz}^\epsilon)^2 + (u_{mzzzz}^\epsilon)^2\notag\\& + (u_{mzzzy}^\epsilon)^2 + (u_{myyxx}^\epsilon)^2 + (u_{mzzxx}^\epsilon)^2 \big]\big)\big\}(\tau)\,d\tau\notag\\&
		\leq C(L,J_0)((1+x),u^2_{0yy}+u^2_{0zz}+u^2_{0zy})\label{E7}\end{align}
		
		\noi for $\epsilon$ sufficiently small and $m$ fixed and sufficiently large, with the constant $C(L,J_0)$ independent of $t>0.$

		{\bf Estimate VII}. Dropping the indices $\epsilon, m$ and the variables $y,z,t$, rewrite (\ref{1para})-(\ref{3para}) in the form		
		\begin{align}& u_{xxx} + u u_x + \epsilon u_{xxxx} = g, \label{1newpara}\\
	&			u(0)= u_{xx}(0) = u(1) = u_x(1)  = 0.\label{2newpara}
			\end{align}

		\noi By (\ref{E7}), $g \in L^2(0,T;L^2(\Om))$. Multiplyng (\ref{1newpara}) by $x$ and integrating over $x \in (0,1)$, we get

		\begin{equation}\label{e71}u_x(0) + u_{xx}(1) - \frac{1}{2} \int_0^1 u^2 \, dx - \epsilon u_{xx}(1) + \epsilon u_{xxx}(1) = \int_0^1 x g\,  dx.\end{equation}
		
		\noi Integrating \eqref{1newpara} over $(x,1)$ gives
		
		\begin{equation}\label{e72}u_{xx}(1) - u_{xx}(x) - \frac{1}{2} u^2 (x) + \epsilon u_{xxx}(1) - \epsilon u_{xxx} (x) = \int_x^1 g \, dx.\end{equation}
		
		\noi Subtracting (\ref{e72}) from (\ref{e71}), we obtain
		$$u_x(0) - \frac{1}{2} \int_0^1 u^2 \, dx - \epsilon u_{xx}(1) + u_{xx} + \frac{1}{2} u^2 + \epsilon u_{xxx} = \int_0^1 gx \, dx - \int_x^1 g \, d\xi.$$ 
		
Define
		
		$$h(x) = -u_x (0) + \frac{1}{2} \int_0^1 u^2 \, dx - \frac{1}{2} u^2 + \int_0^1 gx \, dx - \int_x^1 g \, d\xi.$$
		
		\noi Then (\ref{1newpara}) reads
		
		\begin{equation}\label{3newpara}u_{xx} + \epsilon u_{xxx} = \epsilon u_{xx}(1) + h.\end{equation}
		
		\noi Multiplying (\ref{3newpara}) by $u_{xx}$ and integrating over $(0,1)$, we find
		
		\begin{align*}&\epsilon \int_0^1 u_{xxx} u_{xx} \, dx = \frac{\epsilon}{2} u_{xx}^2(1),\\
		&\epsilon \int_0^1 u_{xx} (1) u_{xx} \, dx = \epsilon u_{xx}(1) \int_0^1 u_{xx} \, dx = -\epsilon u_{xx}(1) u_{x}(0).
		\end{align*}
		
		\noi Hence, (\ref{3newpara}) becomes
		
		\begin{align*} & \int_0^1 u_{xx}^2 \, dx + \frac{\epsilon}{2} u_{xx}^2 (1) = - \epsilon u_{xx}(1) u_{x} (0) + \int_0^1 u_{xx} h\, dx\\
			&\leq \frac{\epsilon}{4} u_{xx}^2 (1) + 4 \epsilon u_x^2(0) + \frac{1}{2} u_{xx}^2 + \frac{1}{2} \int_0^1 h^2 \, dx.
		\end{align*}
		
	\noi	Therefore

		\begin{equation} \int_0^1 u_{xx}^2 \, dx + \frac{\epsilon}{4} u_{xx}(1) \leq 4\epsilon u_{x}^2(0) + \frac{1}{2} \int_0^1 h^2 \, dx\end{equation}
		
	\noi 	and 
		
		$$\|u_{mxx}^\epsilon \|^2 (t) \leq C(J_0, L, \|u_{0yy}\|, \|u_{0yz}\|, \|u_{0zz}\|).$$
{\bf Passage to the limit as $\epsilon \to 0$.}	Using the estimates obtained in Lemmas \ref{lem1}, \ref{lem2} \ref{prop1} and compactness arguments, we can pass to the limit as $\epsilon \to 0$ in (\ref{1para}-\ref{3para}) and get a solution $u_m$ for (\ref{1problema})-(\ref{3problema}) with initial data $u_{0m}$ for $m$ large and fixed:
	\begin{align}
		&   u_{mt} + (1 + u_m)u_{mx} + u_m u_{mx} + \Delta u_{mx} = 0;  \label{1problemao}\\
		&	u_m(0,y,z,t) = u_m(L,y,z,t) =  u_{mx}(L,y,z,t) = 0, \label{2problemao}\\
		&	u_m(x,y,z,0) = u_{0m}(x,y,z),\,\,\,\,  (x,y,z) \in \Omega \label{3problemao}
	\end{align}
	
\noi such that
		
	\begin{align}&\label{1class}u_m \in L^\infty(0,T; H^1(\Omega))\cap L^2(0,T; H^2(\Omega)),\\
&	\label{2class}u_{mt} \in L^\infty(0,T; L^2(\Omega)) \cap L^2(0,T; H^1(\Omega)) \text{ for all } t > 0.\end{align}
		
		Rewriting \eqref{1problemao} as 
		
		$$u_{mxxx} = -u_{mt} -  u_{myyx} - u_{mzzx} - u_{mx} - u_m u_{mx},$$
		
	\noi and making use of \eqref{decaiut}, \eqref{decaiutdps}, \eqref{E7}, we find that
		
		$$u_{mxxx} \in L^2(0,T; L^2(\Omega)), \text{ for all } T>0.$$
		
{\bf Passage to the limit as $m \to \infty$.} Since the constants of estimates in Lemmas \ref{lem1}, \ref{lem2} \ref{prop1} do not depend on $\epsilon, m, t$, we can pass to the limit in (\ref{1problemao})-(\ref{3problemao}) as $m \to \infty$ and obtain a solution $u(x,y,z,t)$ for (\ref{1problema})-(\ref{3problema}) such that
		\begin{align}\label{1classu}&u \in L^\infty(0,\infty; H^1(\Omega))\cap L^2(0,\infty; H^2(\Omega)),\\
		&u(0,y,z,t) \in L^\infty(0,T;H^1(\s))\cap L^2(0,T; H^2(\s)),
			\end{align}
	
		\begin{align}&\| u \|_{H^1(\Omega)}^2(t) + \| u_y \|_{H^1(\Omega)}^2(t) + \| u_z \|_{H^1(\Omega)}^2(t) + \int_\s \{u_{xy}^2(0,y,z,t)\nonumber \\ & + u_{xz}^2(0,y,z,t)\} \, dy \, dz \leq C(J_0, L) e^{-\frac{\chi}{2} t}, \text{ for all } t > 0.\label{decaiproboriginal1}\end{align}
		
		{\bf Regularity of u}. Making use of (\ref{e7}) and (\ref{decaiproboriginal1}), write (\ref{1problema})-(\ref{3problema}) in the form
		\begin{align}&\Delta u_{x} = - u_{t} - u_{x} - \frac{1}{2}(u^2)_x,\label{1elip}\\
			&	u_{x}(L,y,z,t) = 0,\label{2elip}\\
			&	u_{x}(0,y,z,t) = \phi(y,z,t) \in L^\infty(0,T; H^1(\s))\cap L^2(0,T;H^2(\s)).\label{3elip}
			\end{align}
			
			\noi Denoting $v = u_x - \phi(y,z,t) (L-x)$, we get
			\begin{equation}\Delta v = -u_t - v - \frac{1}{2} (u^2)_x + \Delta (\phi(y,z,t)(L-x)) \equiv F(x,y,z,t),
			\end{equation}
			
			\noi where $F \in L^\infty(0,T, H^{-1} (\Omega))$.
			
\noi		 From the inner product
			
			$$(\Delta v, v ) = (F,v),$$
			
\noi			we calculate

			$$\int_\Omega (\phi_y(x-L))_y (u_x - \phi(x-L)) \, d\Omega \leq  \frac{3}{2} \int_\s \phi_y^2 \, dy \, dz + \frac{1}{2} \| u_{xy} \|^2(t),$$
			$$\int_\Omega (\phi_z(x-L))_z (u_x - \phi(x-L)) \, d\Omega \leq  \frac{3}{2} \int_\s \phi_z^2 \, dy \, dz + \frac{1}{2} \| u_{xz} \|^2(t)$$
			
\noi and come to the inequality
			\begin{align*}&\|v_x\|^2(t) + \|v_y\|^2(t) + \|v_z\|^2(t) \leq \|u_t \|^2(t) + \frac{1}{2} \|u_x\|^2(t)\\ &
				 + \frac{1}{2}\int_\s u_x^2 (0,y,z,t) \, dy \, dz  + \frac{3}{2} \|u_x\|^2(t) + \frac{1}{2}  \int_\s u_x^2 (0,y,z,t) \, dy \, dz \\ &+ \frac{1}{4} \|u\|_{L^4(\Omega)}^4(t) + \frac{1}{4} \|v_{x}\|^2(t) +  \frac{1}{2} \|u_x\|^2(t) + \int_\s u_x^2(0,y,z,t) \, dy \, dz\\
				&+ \frac{3}{2}\int_\s u_{xy}^2(y,z,t) + u_{xz}^2(y,z,t) \, dy + \frac{1}{2} \|u_{xy} \|^2(t) + \frac{1}{2} \|u_{xz}\|^2(t).
			\end{align*}
			
			\noi Since $ \|u\|_{L^4}^4(t) \leq 4^3 \|u \|(t) \|\nabla u \|^3(t)$, making use of (\ref{decaiproboriginal1}), we get
			
			$$\|u_{xx}\|^2(t) \leq C(J_0, L) e^{-\frac{\chi}{2} t} \text{ for all } t>0.$$
			
			Returning to (\ref{1elip})-(\ref{3elip}), we can see that $F \in L^2(0,\infty; L^2(\Omega))$. This implies that $u \in L^2(0,\infty; H^2(\Omega))$. Hence $u \in L^\infty(0,\infty;H^2(\Omega))\cap L^2(0,\infty; H^3(\Omega))$.
			
			{\bf Regularity of $\Delta u_x$}. Writing $\Delta u_x = -u_t - u_x - u u_x$, and recalling that	
			$$u_x, u_t, u u_x \in L^\infty (0,T; L^2(\Omega)),$$	
			\noi we estimate	
			\begin{align}
			&	\| u u_{x} \|_{H^1(\Omega)}(t) \leq \|  u u_x \|(t) + \| \nabla (u u_x) \|(t)\notag \\
				&\leq \| u \|_{L^4(\Omega)} (t) \| u_x \|_{L^4(\Omega)} (t) + \| u \|_{L^4(\Omega)} (t) \| \nabla u_x \|_{L^4(\Omega)} (t) + \| \nabla u \|_{L^4(\Omega)}^2 (t)\notag \\
				&\leq 2^3 [ \|u \|^{1/4} (t) \| \Delta u \| (t) \| \nabla u_x \|^{3/4}(t)\notag \\
				&+ \|u \|^{1/4} (t) \| \nabla u \|^{ 3/4} (t) \| \nabla u_{x} \|^{1/4} (t) \| \Delta u_x \|^{3/4} (t) \|  \Delta u \|^{3/2}(t) ]\notag\\
				& + \| \nabla u \|^{1/4}(t) \| \Delta u \|^{3/4}(t) \leq C \| u \|_{H^3(\Omega)}^2(t).\label{h1norm}
			\end{align}
			
			\noi where C is a constant independent of $t$. By (\ref{1class}), (\ref{2class}) and (\ref{h1norm}) read
			
			$$u_t, u_x, uu_x \in L^2(0,T; H^1(\Omega)).$$
			
			\noi Hence
			
			$$\Delta u_x \in L^\infty (0,\infty; L^2(\Omega)) \cap L^2(0,\infty; H^1(\Omega)).$$
			
			\noi This proves the existence part of Theorem \ref{main2}.
\section{Uniqueness of a regular solution and continuous dependence on initial data}
\label{uniqueness}

			\begin{thm}A global regular solution to (\ref{1problema})-(\ref{3problema}) is uniquelly defined.\end{thm}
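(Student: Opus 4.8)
The plan is to run the standard energy argument on the difference of two solutions, using the same weighted multiplier $(1+x)(\cdot)$ that drives the estimates of Section \ref{existence}. Suppose $u_1$ and $u_2$ are two global regular solutions of (\ref{1problema})--(\ref{3problema}) sharing the initial datum $u_0$, and set $w=u_1-u_2$. Rewriting the nonlinearity as $u_1u_{1x}-u_2u_{2x}=u_1w_x+w\,u_{2x}$, the difference satisfies
\begin{equation*}
w_t+w_x+u_1w_x+w\,u_{2x}+\Delta w_x=0,
\end{equation*}
together with $w(0,y,z,t)=w(L,y,z,t)=w_x(L,y,z,t)=0$ and $w(x,y,z,0)=0$. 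The regularity class of Theorem \ref{main2} justifies all the manipulations below.

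I would multiply this equation by $2(1+x)w$ and integrate over $\Omega$. Integrating the weighted dispersive term by parts in $x,y,z$ and using the boundary conditions (which in particular force $w_y=w_z=0$ on $x=0,L$ and $w_x=0$ on $x=L$), one obtains the interior dissipation plus a single boundary trace,
\begin{equation*}
2\big((1+x)w,\Delta w_x\big)(t)=3\|w_x\|^2(t)+\|w_y\|^2(t)+\|w_z\|^2(t)+\int_{\s}w_x^2(0,y,z,t)\,dy\,dz.
\end{equation*}
Collecting every contribution produces an identity of the form
\begin{align*}
\frac{d}{dt}\big((1+x),w^2\big)(t)&+3\|w_x\|^2(t)+\|w_y\|^2(t)+\|w_z\|^2(t)+\int_{\s}w_x^2(0,y,z,t)\,dy\,dz\\
&=\|w\|^2(t)+\big(u_1,w^2\big)(t)+\big((1+x)u_{1x},w^2\big)(t)-2\big((1+x)u_{2x},w^2\big)(t).
\end{align*}

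The only genuinely delicate step is the treatment of the terms $\big((1+x)u_{ix},w^2\big)$, since in three space dimensions $u_{ix}$ lies only in $H^1$ and cannot be controlled in $L^\infty$. Here I would invoke Lemma \ref{lemma1}: taking $q=3$, hence $\theta=\tfrac12$, it gives $\|w\|_{L^3(\Omega)}^2\le 4\,\|\nabla w\|(t)\|w\|(t)$, while $\|u_{ix}\|_{L^3(\Omega)}\le C\|u_i\|_{H^2(\Omega)}$ is bounded by the a priori estimates of Theorem \ref{main2}. Hölder's inequality then yields $\big|\big((1+x)u_{ix},w^2\big)(t)\big|\le C\|\nabla w\|(t)\|w\|(t)$, and Young's inequality splits this into a small multiple of $\|\nabla w\|^2(t)$, which is absorbed by the dissipative terms on the left, plus a multiple of $\|w\|^2(t)$. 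The remaining terms are harmless: $\|w\|^2\le\big((1+x),w^2\big)$, and $\big(u_1,w^2\big)(t)\le\|u_1\|_{L^\infty(\Omega)}(t)\|w\|^2(t)$ is controlled because $u_1\in H^2(\Omega)\hookrightarrow L^\infty(\Omega)$ in dimension three.

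After absorption there remains a differential inequality $\frac{d}{dt}\big((1+x),w^2\big)(t)\le C\,\big((1+x),w^2\big)(t)$, with $C$ depending only on the time-uniform $H^2$-bounds of $u_1,u_2$ supplied by Theorem \ref{main2}. Gronwall's lemma together with $\big((1+x),w^2\big)(0)=0$ forces $w\equiv0$, i.e.\ $u_1=u_2$. The identical computation for solutions with distinct data $u_0^{(1)},u_0^{(2)}$ leaves $\big((1+x),w^2\big)(0)\neq0$ and yields the continuous-dependence bound $\big((1+x),w^2\big)(t)\le\big((1+x),w^2\big)(0)\,e^{Ct}$. I expect the crux of the argument to lie entirely in the choice of the weighted multiplier $(1+x)w$: the plain multiplier $w$ generates only the boundary trace $\int_{\s}w_x^2(0,y,z,t)\,dy\,dz$ and no interior gradient, so the nonlinear term $\big((1+x)u_{ix},w^2\big)$ would have nothing to be absorbed against.
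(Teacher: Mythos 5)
Your proposal is correct and follows essentially the same route as the paper's proof: the weighted multiplier $2(1+x)w$ producing the dissipation $3\|w_x\|^2+\|w_y\|^2+\|w_z\|^2+\int_{\s}w_x^2(0,y,z,t)\,dy\,dz$, the interpolation inequality of Lemma \ref{lemma1} to bound the quadratic terms in $w$ by a small multiple of $\|\nabla w\|^2$ (absorbed on the left) plus $C\|w\|^2$ with $C$ controlled by the norms of $u_1,u_2$ from Theorem \ref{main2}, and Gronwall's lemma, including the same continuous-dependence remark. The only (harmless) deviations are your splitting $u_1u_{1x}-u_2u_{2x}=u_1w_x+wu_{2x}$ in place of the paper's $ww_x+(wu_2)_x$ --- which keeps every term quadratic in $w$ and spares you the cubic term $\frac23(1,w^3)$ that the paper must handle via $\|w\|^6=\|w\|^4\|w\|^2$ with $\|w\|^4$ bounded by the solutions' norms --- and your use of $L^3$ interpolation ($\theta=\tfrac12$, requiring the $H^2$-bound on $u_i$) instead of the paper's $L^4$ estimate ($\theta=\tfrac34$, needing only $\|u_2\|$, $\|u_{2x}\|$).
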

			
			\proof 
			Let $u_1, u_2$ be two distinct solutions to (\ref{1problema})-(\ref{3problema}) and $w = u_1 - u_2$. Then
			
			\begin{align*}& w w_x + (wu_2)_x = (u_1 - u_2) (u_1 - u_2)_x + (u_1 - u_2)_x u_2 + (u_1 - u_2) u_{2x}\\
				&= u_1 u_{1x} - u_1{u_2x} - u_2 u_{1x} + u_2 u_{2x} + u_{1x} u_2 - u_{2x} u_2 + u_{1} u_{2x} - u_2 u_{2x}\\
				&= u_1 u_{1x} - u_2 u_{2x}
			\end{align*}
			
			\noi and (\ref{1problema})-(\ref{3problema}) can be rewritten in the form
			
			\begin{align}& Lw = w_t + w_x + \Delta w_x + w w_x+ (w u_2)_x,\\
				&	w(0,y,z,t) = w(L,y,z,t) = w(L,y,z,t) = 0,\\
				&	w(x,y,z,0) = 0.
			\end{align}

			\noi Transform the inner product
			
			$$2((1+x)Lw, w)(t) = 0$$ 
			
			\noi into the following equality
			
			\begin{align*}&\frac{d}{dt} ((1+x), w^2)(t) + \int_\s w_{x}^2(0,y,z,t)\, dy \, dz + 2\| w_x \|^2(t) + \| \Delta w \|^2(t)\\ &- \frac{2}{3} (1,w^3)(t) + ((1+x)u_{2x} - u_2, w^2)(t)  = 0.\end{align*}
			
			Using Lemma \ref{lemma1}, we find
			
			\begin{align*}
				&I_1=-\frac23(1,w^3)(t)\leq \frac34 \delta^{4/3}\|\nabla w\|^2(t)+\frac{2^{14}}{3^4\delta^4}\|w\|^6(t)
			\end{align*}
			
			\noi and
			
			\begin{align*} & I_2 = ((1+x)u_{2x} - u_2, w^2)(t) \leq \| (1+x) u_{2x} - u_2 \|(t) \| w \|_{L^4}^2(t)\\
				&\leq (1+L)[\|u_{2x}\|(t) + \|u_2 \|(t)] 4^{3/2} \|w \|^{1/2}(t) \| \nabla w \|^{3/2}(t)\\
				&\leq \frac{C}{ \delta ^4} (\|u_{2x} \|^4(t) + \| u_2 \|^4(t) \|w \|^2(t)) + \frac{3}{4}\delta^{4/3} \| \nabla w \|^2(t).
			\end{align*}
			
		  \noi For $\delta >0 $ sufficiently small, we find
			
			$$\frac{d}{dt} (1+x, w^2)(t) \leq C \big[ \| u_{2x} \|^4(t) + \| u_1 \|^4(t) + \|u_2 \|^4(t) \big] (1+x, w^2)(t).$$
			
\noi By the Grownwall Lemma,
			$$\|w \|^2(t) \leq ((1+x), w^2)(t) \equiv 0.$$
			
			\begin{rem}
				If $w(x,y,z,0)=w _0(x,y,z)\ne 0,$ then
				$$\|w\|^2(t)\leq ((1+x),w^2)(t)\leq C(L,J_0)((1+x),w^2_0)\;\forall t>0.$$
				This means continuous dependence of solutions to (\ref{1problema})-(\ref{3problema}) on initial data.
			\end{rem}

	\begin{rem}
		The geometrical restriction  $L \leq \frac{\pi}{2}$ in Theorem 2.1 is caused by the presence of the term $u_x$ in (2.1) and is connected with spectral properties of the linear spatial operator  $u_x+\Delta u_x$ and existing of critical size domains (see \cite{larh1} in 2D case). On the other hand, there are some boundary conditions under which there are not critical size domains \cite{doronin3}. We need also small initial data in order to suppress destabilizing effects of the nonlinear  convective term $uu_x$. We must note that in \cite{lar3} such restrictions for $c_su_x$ and initial data did not appear while establishing the existence of weak solutions for the 3D ZK equation, but there was an open problem, still unresolved, on uniqiueness of this weak solution. 	\end{rem}

	{\bf Conclusions.} We have established the existence and uniqueness of global regular solutions to (\ref{1problema})-(\ref{3problema}) as well as exponential decay of
	 the $H^2$-norm exploiting an approach of proving simultaneously existence  and exponential decay. Therefore geometrical restrictions and ``smallness" conditions for initial data have appeared. Of course, theses restrictons are not necessary while proving only existence and uniqueness of global regular solutions for the 2D  ZK equation. Nevertheless. similar restrictions appear while proving exponential decay of the existing global regular solutions \cite{larh1}.

\
\medskip

\bibliographystyle{torresmo}

\end{document}